\theoremstyle{thmstyleone}%
\newtheorem{theorem}{Theorem}
\newtheorem{proposition}[theorem]{Proposition}%
\newtheorem{lemma}[theorem]{Lemma}
\theoremstyle{thmstyletwo}%
\newtheorem{example}{Example}%
\newtheorem{remark}{Remark}%
\theoremstyle{thmstylethree}%
\newtheorem{definition}{Definition}%
\theoremstyle{thmstylefour}%
\newtheorem{assumption}{Assumption}%
\begin{document}

\title[Sampling strategies for expectation values within the Herman--Kluk approximation]{Sampling strategies for expectation values within the Herman--Kluk approximation}


\author[1]{\fnm{Fabian} \sur{Kröninger}}\email{fabiankroninger@epfl.ch}

\author[2]{\fnm{Caroline} \sur{Lasser}}\email{classer@ma.tum.de}

\author[1]{\fnm{Ji\v{r}\'{i} J.~L.} \sur{ Van\'{i}\v{c}ek}}\email{jiri.vanicek@epfl.ch}

\affil[1]{Laboratory of Theoretical Physical Chemistry, Institut des Sciences et Ingénierie Chimiques, Ecole Polytechnique Fédérale de Lausanne (EPFL), CH-1015 Lausanne, Switzerland}

\affil[2]{ School of Computation, Information and Technology, Department of Mathematics, Technische Universität München, Germany }

\abstract{When computing quantum-mechanical observables, the ``curse of dimensionality'' limits the naive approach that uses the quantum-mechanical wavefunction.
The semiclassical Herman--Kluk propagator mitigates this curse by employing a grid-free ansatz to evaluate the expectation values of these observables.
Here, we investigate quadrature techniques for this high-dimensional and highly oscillatory propagator.
In particular, we analyze Monte Carlo quadratures using three different initial sampling approaches. 
The first two, based either on the Husimi density or its square root, are independent of the observable whereas the third approach, which is new, incorporates the observable in the sampling to minimize the variance of the Monte Carlo integrand at the initial time. 
We prove sufficient conditions for the convergence of the Monte Carlo estimators and provide convergence error estimates.
The analytical results are validated by numerical experiments in various dimensions on a harmonic oscillator and on a Henon-Heiles potential with an increasing degree of anharmonicity.}
\keywords{Schrödinger equation; semiclassical Herman--Kluk; quantum-mechanical observables; Monte Carlo quadrature; frozen Gaussian approximation}


\maketitle

\section{Introduction}\label{Section:1}
Molecular quantum dynamics is a vibrant, interdisciplinary field of research dedicated to gaining insights into chemical and physical phenomena. To simulate it, one must solve the time-dependent Schrödinger equation 
\begin{align}\label{EQ:TDSE}
    i\epsilon \partial_t \psi(t,x) = \left(-\frac{\epsilon^2}{2} \Delta + V(x) \right) \psi(t,x), \quad \psi(0,x) = \psi_0(x)\in L^2(\mathbb{R}^D)
\end{align}
for nuclei on an electronic potential energy surface $V$ that arises from the Born--Oppenheimer approximation \cite{Born_Oppenheimer:1927}.
Here, $0<\epsilon\ll 1$ denotes the semiclassical parameter, $\Delta:= \sum_{j=1}^D \partial^2_{x_j}$ is the Laplacian and $V:\mathbb{R}^D \rightarrow \mathbb{R}$ is a smooth potential of sub-quadratic growth \cite[Chapter 11.1]{Robert_Combescure:2021} such that the Hamiltonian $\hat{H}=-\frac{\epsilon^2}{2}\Delta + V$ is a self-adjoint operator when equipped with an appropriate domain. The unitary evolution operator
\begin{align} \label{EQ:Unitary_evolution_operator}
\hat{U}_t := \exp{(-i\hat{H}t/\epsilon)} 
\end{align}
then gives the solution to \eqref{EQ:TDSE} as $\psi(t) = \hat{U}_t\psi_0$.

The numerical solution of the time-dependent Schrödinger equation is challenging.
In most applications, conventional grid-based integration methods are not feasible due to the large dimension $D$ of the configuration space of a molecular system. 
In addition, the high oscillations induced by the small parameter $\epsilon$ exacerbate this curse of dimensionality.

To overcome these issues, semiclassical methods \cite{Miller:2001, book_Tannor:2007,  book_Lubich:2008, book_Heller:2018, Lasser_Lubich:2020} have been developed using a priori knowledge of the solution's behaviour. 
Single-particle semiclassical methods, such as the thawed Gaussian approximation \cite{Heller:1975}, are limited in their applicability because some quantum effects, such as wavepacket splitting, cannot be described by these methods. 
Multi-particle methods often remain accurate and provide further qualitative insights. 
The Herman--Kluk approximation \cite{Herman_Kluk:1984}, which refines Heller's frozen Gaussian approximation \cite{Heller:1981}, remains one of the most accurate multi-particle semiclassical methods.

Within the Herman--Kluk approximation, the solution of the time-dependent Schrödinger equation \eqref{EQ:TDSE} is approximated by a high-dimensional and highly oscillatory integral over the phase space $\mathbb{R}^{2D}$ which has the general form
\begin{align}\label{EQ:HK_introduction}
   \psi(t,x) \approx \int_{\mathbb{R}^{2D}}  a_t(z) e^{i\phi_t(x,z)/\epsilon}   \,dz.
\end{align}
The integrand is time-dependent and the complex-valued functions $a_t$ and $\phi_t$ are obtained by solving ordinary differential equations.
Integral \eqref{EQ:HK_introduction} is motivated by the continuous superposition of time-dependent Gaussian wavepackets
\begin{align}\label{EQ:Gaussian}
    g_{z(t)}(x)=\left(\frac{\det \Gamma}{\pi^D\epsilon^D}  \right)^{1/4}\exp{\left(-\frac{1}{2\epsilon} (x-q(t))^T \Gamma (x-q(t)) +\frac{i}{\epsilon}  p(t)^T(x-q(t))  \right)}
\end{align} that have a fixed, time-independent (``frozen''), symmetric, and positive-definite width matrix $\Gamma\in\mathbb{R}^{D\times D}$, and are centered at a phase-space point $z(t)=(q(t),p(t))\in\mathbb{R}^{2D}$.
The time-dependent position and momentum parameters $q(t)$ and $p(t)$ obey Hamilton's equations of motion $\dot{q}=p$ and $\dot{p}=-\nabla V(q)$, whereas the lack of change in width $\Gamma$ is compensated by a time-dependent prefactor.
Due to the frozen Gaussian ansatz, the function $e^{i\phi_t(\cdot,z)/\epsilon}$ in \eqref{EQ:HK_introduction} will be of Schwartz class on the configuration space $\mathbb{R}^D$.

The use of the wavefunction per se is limited by the dimension of the configuration space $\mathbb{R}^D$.
In typical applications, where $D\gg 1$, further insights into a quantum system can
be obtained only by circumventing the evaluation of the wavefunction and, instead, by directly evaluating expectation values of quantum-mechanical observables. These expectation values provide information not only about position, momentum and kinetic, potential and total energies but also about the norm of the state.
For a normalized state $\psi$, the expectation value of a self-adjoint operator $\hat{A}$ on $L^2(\mathbb{R}^D)$ is given by the inner product
\begin{align}\label{EQ:Ex_value_introduction}
    \langle \psi , \hat{A} \psi \rangle = \int_{\mathbb{R}^D} \overline{\psi(x)} (\hat{A} \psi)(x) \,dx.
\end{align}
Conventionally, one must obtain a solution of the time-dependent Schrödinger equation \eqref{EQ:TDSE} and then compute the overlap \eqref{EQ:Ex_value_introduction} using a numerical quadrature. 
Inserting approximation \eqref{EQ:HK_introduction} into the inner product \eqref{EQ:Ex_value_introduction} leads to
\begin{align}\label{EQ:HK_ex_value_introduction}
    \langle \psi(t), \hat{A} \psi(t) \rangle \approx\int_{\mathbb{R}^{4D}} \overline{a_t(y)} a_t(z)\langle e^{i\phi_t(y)/\epsilon}, \hat{A} e^{i\phi_t(z)/\epsilon} \rangle \,d(y,z).
\end{align}
This integral acts on the double phase space $\mathbb{R}^{2D} \otimes \mathbb{R}^{2D}  \simeq \mathbb{R}^{4D} $. 
As we shall see in \Cref{Sec:Herman_Kluk}, the special form of $\phi_t$ due to the frozen Gaussian ansatz ensures the well-definedness of the inner product $\langle e^{i\phi_t(y)/\epsilon}, \hat{A} e^{i\phi_t(z)/\epsilon} \rangle$ and significantly simplifies it. 
In particular, an additional numerical quadrature is often unnecessary, as explicit formulas are known. 
Instead of two separate Herman--Kluk integrals on $\mathbb{R}^{2D}$, the expectation value is interpreted as a weighted integral on a double phase space $\mathbb{R}^{4D}$ with respect to a product measure and it can be computed with a single numerical quadrature.
The notion of double phase space also appears in other contexts of mathematical physics, for example, in the application of the Weyl propagator \cite{Ozorio_Ingold:2021, Littlejohn:1990} or forward-backward semiclassical propagation \cite{Antipov_Ananth:2015, Makri_Miller:2002, Ozorio_Brodier:2006, Saraceno:2016, Church_Ananth:2017}.

\subsection{Previous research}
The Herman--Kluk propagator has been used extensively in theoretical chemistry. 
Several groups have applied further approximations to make the Herman--Kluk propagator computationally more feasible, see \cite{Walton_Manolopoulos:1996, Elran_Kay:1999a, Ceotto_Atahan:2009a, Makri:2011, Antipov_Ananth:2015} for some exemplary contributions.
In \cite{Zimmermann_Vanicek:2013}, the authors investigated sampling approaches for classical time autocorrelation functions and they proposed an observable-dependent sampling ensuring that the convergence of the estimator is independent of the dimensionality of the system and the underlying dynamics.

From a mathematical point of view, the Herman--Kluk propagator has been rigorously analyzed by several authors. 
Swart and Rousse \cite{Swart_Rousse:2008} and Robert \cite{Robert2010} proved that the Herman--Kluk propagator approximates the unitary evolution operator to the first-order of $\epsilon$. 
An extensive overview of single- and multi-particle semiclassical methods (including the Herman--Kluk propagator) and their error analysis can be found in \cite{Lasser_Lubich:2020}.

In the mathematical literature, the Herman--Kluk approximation is sometimes referred to as the frozen Gaussian approximation.
Lu and Yang apply it to high-frequency wave propagation \cite{Lu_Yang:2010, Lu_Yang:2012} and general linear strictly hyperbolic systems \cite{Lu_Yang:2012a}.
Delgadillo et al. generalized the Herman--Kluk propagator to a gauge-invariant frozen Gaussian approximation for the Schrödinger equation with periodic potentials  \cite{Delgadillo_Yang:2016, Delgadillo_Yang:2018}. 
In \cite{Lasser_Sattlegger:2017}, the numerical discretization of the Herman--Kluk wavefunction in both time and phase space was analyzed and the special form of the expectation value \eqref{EQ:HK_ex_value_introduction} was introduced. 
Moreover, various wavefunction sampling approaches were investigated in \cite{Kroeninger_Lasser_Vanicek:2023}.
Xie and Zhou also investigated mesh-free discretization methods for wavefunctions and expectation values \cite{Xie_Zhou:2021}. They introduced an estimator for expectation values that scales quadratically with the number of quadrature points, whereas our approach scales linearly.
Huang et al. combined the Herman--Kluk approximation with surface hopping to compute the wavefunction and expectation values in a nonadiabatic regime \cite{Huang_Zhou:2022}.

\subsection{Main results}
Owing to the frozen Gaussian ansatz, the Herman--Kluk expectation value \eqref{EQ:HK_ex_value_introduction} can be perfectly combined with probabilistic discretization methods such as the Monte Carlo quadrature.
Common Monte Carlo approaches originate from the Husimi and ``sqrt-Husimi'' (absolute value of an inner product of a Gaussian wavepacket with the initial state $\psi_0$) densities. 
While the Husimi density is well-defined for all $\psi_0$ coming from $L^2(\mathbb{R}^D)$, the sqrt-Husimi density requires a restriction on the domain of the initial states.
Because both of these densities act only on a single phase-space $\mathbb{R}^{2D}$, the quadrature points of the double phase-space
$\mathbb{R}^{4D}$ are then generated by sampling both individual phase spaces either from the Husimi or sqrt-Husimi densities.
For the most important quantum-mechanical operators $\hat{A}$, we show that the Monte Carlo integrand based on the Husimi density has an unbounded variance while choosing the sqrt-Husimi density leads to finite values.
Moreover, we show examples in which the variance of the sqrt-Husimi approach has an exponential dependence on the spatial dimension $D$ of the system.
The convergence of the estimators can be further improved by including operator $\hat{A}$ in the sampling density.
We prove that such a choice minimizes the variance of the Monte Carlo integrand at the initial time.
Sampling from a probability density that incorporates the observable was also investigated for classical time autocorrelation functions \cite{Zimmermann_Vanicek:2013}. The authors show that the convergence of the Monte Carlo estimator is independent of the system’s dimensionality and underlying dynamics. However, for Herman--Kluk expectation values we provide an example where the variance still has an exponential dependence on the dimension $D$. In fact, it is not uncommon for the convergence of Monte Carlo quadrature to depend on the dimension of the problem \cite[Section 12.2]{Henning_Kersting:2022}.

Because of the inclusion of operator $\hat{A}$, the new sampling density may only be known up to a constant value, making the standard Monte Carlo estimator inaccurate.
To overcome this issue, we introduce self-normalizing estimators, which are based on a weighted average, whereas the standard Monte Carlo quadrature is non-weighted.
Specifically, we approximate the Herman--Kluk expectation value \eqref{EQ:HK_ex_value_introduction} using an estimator in the form
\begin{align}
\begin{split}
   &\int_{\mathbb{R}^{4D}} \overline{a_t(y)} a_t(z)\langle e^{i\phi_t(y)/\epsilon}, \hat{A} e^{i\phi_t(z)/\epsilon} \rangle \,d(y,z) 
   \\& \approx \frac{1}{N}\sum_{j=1}^N\left[\overline{a_t(y_j)} a_t(z_j)\langle e^{i\phi_t(y_j)/\epsilon}, \hat{A} e^{i\phi_t(z_j)/\epsilon} \rangle \frac{W(y_j,z_j)}{\rho_1(y_j,z_j)}\right] /\sum_{j=1}^N W(y_j,z_j),
\end{split}
\end{align}
with a weight $W=\rho_1/\rho_2$, two probability densities $\rho_1$, $\rho_2$ and samples $(y_j,z_j)$ that are independent and identically distributed with respect to $\rho_2$.
We show that this estimator converges to the expectation value \eqref{EQ:HK_ex_value_introduction} as long as samples $(y_j,z_j)$ can be generated from $\rho_2$. In particular, it is sufficient to know the density $\rho_2$ only up to a constant factor.
Moreover, we prove that the bias of the estimator vanishes as $N\to \infty$ for any choice of the weight $W$ and that under certain conditions, the bias, variance, and mean squared error of the estimator are in the order of $\mathcal{O}(N^{-1})$.
We investigate several choices of weight $W$ based on the previously discussed sampling approaches. 

By combining Markov Chain Monte Carlo algorithms with self-normalizing estimators, we propose an advanced, novel algorithm for the approximation of expectation values with linear scaling improving on known approaches with quadratic scaling.

\subsection{Outline of the paper}
The remainder of this paper is organized as follows. 
In the next section, we introduce the Herman--Kluk approximation and discuss its ability to compute the expectation values of observables $\hat{A}$. 
In \Cref{Sec:2}, we introduce the Monte Carlo quadrature and its crude Monte Carlo estimator as our main numerical tool for solving the high-dimensional and highly oscillatory integral \eqref{EQ:Ex_value_introduction} obtained from the Herman--Kluk approximation. 
We provide sufficient conditions for the convergence of the estimator and investigate the different sampling approaches.  
In \Cref{Sec:3}, we extend the crude Monte Carlo estimator to a self-normalizing version and analyze its properties in detail.
The general algorithm that computes \eqref{EQ:Ex_value_introduction} within the Herman--Kluk approximation is summarized in \Cref{Sec:4}. We complete the paper with numerical examples in several dimensions in \Cref{Sec:5} and a short discussion in \Cref{Sec:6}.

\subsection{Notation}
Throughout this paper, we denote the space of Schwartz functions on the configuration space $\mathbb{R}^D$ as $\mathcal{S}(\mathbb{R}^D)$.
These are smooth functions that decrease rapidly together with all their derivatives.
Moreover, we denote a point in the double phase space $\mathbb{R}^{4D}$ by  $w=(y,z) \in\mathbb{R}^{4D}$, consisting of $y$ and $z$ each originating from a single phase space $\mathbb{R}^{2D}$.

We restrict ourselves to the linear operators $\hat{A}=\mathrm{Id}$ (representing the norm squared), powers $\hat{q}^n$ and $\hat{p}^n$ for $n\in\mathbb{N}$ of position and momentum, kinetic energy $T(\hat{p})$, potential energy $V(\hat{q})$ and total energy $\hat{H}=T(\hat{p}) + V(\hat{q})$ as they are the most interesting operators in quantum mechanics. We denote the $D$-dimensional identity matrix by $\mathrm{Id}_D$.

For a random variable $X$ distributed with respect to probability density $\rho$, the expectation value and variance are denoted by
\begin{align}
    \mathbb{E}[X]= \int x\rho(x) \,dx \quad\text{and}\quad \text{Var}[X] = \int \vert x - \mathbb{E}[X] \vert^2 \rho(x) \,dx
\end{align}
and for an estimator $\mathbb{X}$ of $\mathbb{E}[X]$, the bias is defined as
\begin{align}
    \text{Bias}(\mathbb{X}) = \vert \mathbb{E}[\mathbb{X}] - \mathbb{E}[X] \vert.
\end{align}

\section{The Herman--Kluk propagator}\label{Sec:Herman_Kluk}
Similar to Heller \cite{Heller:1981}, Herman and Kluk \cite{Herman_Kluk:1984} argued that a quantum system cannot be described accurately by a single Gaussian wavepacket. Instead, they proposed using multiple Gaussians to solve the time-dependent Schrödinger equation \eqref{EQ:TDSE}.

\subsection{Integral representation}
The inversion formula of the Fourier-Bros-Iagolnitzer (FBI) transform provides a tool to represent a wavefunction in terms of a phase-space integral of Gaussians.

For $z\in\mathbb{R}^{2D}$, the FBI transform \cite[Chapter 3.1]{Martinez:2002} is defined as
\begin{align} \label{EQ:FBI_transform}
    \mathcal{T}: \mathcal{S}(\mathbb{R}^D) \rightarrow \mathcal{S}(\mathbb{R}^{2D}), \quad (\mathcal{T}\psi)(z) := (2\pi\epsilon)^{-D/2} \langle g_z, \psi \rangle,
\end{align}
where the Gaussian $g_z$ is parameterized as in \eqref{EQ:Gaussian}.
The FBI transform maps $\mathcal{S}(\mathbb{R}^D)$ continuously into $\mathcal{S}(\mathbb{R}^{2D})$ (see \cite[Proposition 3.1.6]{Martinez:2002})
and there is the inversion formula \cite[Proposition 5.1]{Lasser_Lubich:2020}
    \begin{align}\label{EQ:Inversion_FBI}
        \psi(x) = (2\pi\epsilon)^{-D} \int_{\mathbb{R}^{2D}} \langle g_z, \psi \rangle g_z(x) \,dz
    \end{align}
for all $\psi\in \mathcal{S}(\mathbb{R}^D)$. The FBI transform can be extended to an isometry from $L^2(\mathbb{R}^D)$ to $L^2(\mathbb{R}^{2D})$ (see \cite[Proposition 3.1.1]{Martinez:2002}) and, because the Schwartz space $\mathcal{S}$ is dense in $L^2$, the inversion formula \eqref{EQ:Inversion_FBI} extends to square-integrable functions.

Based on the continuous superposition \eqref{EQ:Inversion_FBI}, one writes 
\begin{align}
    (\hat{U}_t\psi_0)(x) = (2\pi\epsilon)^{-D} \int_{\mathbb{R}^{2D}} \langle g_z, \psi_0 \rangle (\hat{U}_t g_z)(x)\,dz,
\end{align}
which motivates the Herman--Kluk propagator
\begin{align} \label{EQ:Herman_Kluk_propagator}
\begin{split}
    (\hat{U}_t^{\rm HK} \psi_0)(x) &:=(2\pi\epsilon)^{-D} \int_{\mathbb{R}^{2D}}\langle g_z, \psi_0 \rangle  R_t(z) e^{iS_t(z)/\epsilon}  g_{z(t)}(x)  \,dz,
\end{split}
\end{align}
which is in the form of \eqref{EQ:HK_introduction} with

\begin{align}
    \begin{split}
    a_t(z)&= (2\pi\epsilon)^{-D}R_t(z) \langle g_z,\psi_0 \rangle  
    \\ \text{and} \quad \phi_t(x,z) &= \frac{i}{2} (x-q(t))^T \Gamma (x-q(t)) + p(t)^T(x-q(t)) + S_t(z).
    \end{split}
\end{align}
In particular, since $\Gamma$ is real symmetric and positive-definite, we have $e^{i\phi_t(\cdot, z)}\in L^2(\mathbb{R}^D)$ for any fixed $z\in\mathbb{R}^{2D}$.
Here, $z(t)=(q(t),p(t))\in\mathbb{R}^{2D}$ is a classical trajectory in the phase space
associated with the Hamiltonian function $h(q,p)= \vert p \vert^2/2+V(q)$ and $S_t(z)\in\mathbb{R}$ is the classical action along this trajectory. 
The Herman--Kluk prefactor $R_t(z)\in\mathbb{C}$ is defined as
\begin{align} \label{HK_prefactor}
    R_t(z) :=2^{-D/2} \det{\left( M_{qq} + \Gamma^{-1}  M_{pp}  \Gamma -i M_{qp}  \Gamma + i \Gamma^{-1} M_{pq} \right)}^{1/2},
\end{align}
where the $D\times D$ matrices $M_{\alpha \beta}$, with $ \alpha, \beta \in \{q,p\},$  are the blocks of the stability matrix $M(t) := \partial z(t)/\partial z$, and the branch of the square root of \eqref{HK_prefactor} is chosen continuously with respect to time.
For all phase-space points $z\in \mathbb{R}^{2D}$, the parameters $z(t)=(q(t),p(t))$, $S_t$, and $M(t)$ obey the ordinary differential equations (ODEs)
\begin{align} 
    \dot{z}(t) &= J \cdot \nabla h(z(t)), \label{EQ:Traj}
    \\ \dot{S}_t(z) &= \frac{1}{2}\vert p(t) \vert^2 - V(q(t)), \label{EQ:S}
    \\ \dot{M}(t) & = J \cdot \text{\rm{Hess }} h(z(t)) \cdot M(t) \label{EQ:Stab_mat}
\end{align}
with the symplectic matrix $J: = \begin{pmatrix}
    0 & \mathrm{Id}_D \\ -\mathrm{Id}_D & 0
\end{pmatrix} \in \mathbb{R}^{2D}$, the Hamiltonian function
\begin{align}
    h: \mathbb{R}^{2D} \rightarrow \mathbb{R}, (q,p) \mapsto \frac{1}{2}\vert p \vert^2 + V(q)
\end{align} and initial conditions $z(0)=z$,  $S_0(z) = 0$ and $M(0) = \mathrm{Id}_D$. 
\begin{remark}
    In the mathematical literature, spherical Gaussians (i.\,e., Gaussians with $\Gamma=\textup{Id}_D$) are often considered \cite{Lasser_Lubich:2020, Lasser_Sattlegger:2017, Swart_Rousse:2008}. 
    In chemical physics simulations, the width matrix $\Gamma$ is usually chosen such that the Gaussian is an eigenstate of the Hamiltonian $\hat{H}=T+\tilde{V}$, where the potential $\tilde{V}$ is a harmonic approximation of the real potential $V$ at a reference position $q_0$ (i.\,e. $\tilde{V}$ is equal to the second order Taylor expansion of $V$ around $q_0$) \cite{Ceotto_Atahan:2009a, Makri:2011}. 
    Hence, we keep width matrices real symmetric and positive-definite to have flexible initial data for chemical applications.
\end{remark}

Swart and Rousse \cite{Swart_Rousse:2008}, and Robert \cite{Robert2010} rigorously justified the Herman--Kluk propagator (see also \cite{Kay:2006,Lasser_Lubich:2020,Lu_Yang:2012a, Lu_Yang:2012}). More recently, discretizations with respect to time and phase space for numerical calculations have been analyzed \cite{Lasser_Sattlegger:2017, Kroeninger_Lasser_Vanicek:2023, Xie_Zhou:2021}.
In particular, the Herman--Kluk propagator has the following properties:
\begin{enumerate}
\item For a sub-quadratic potential $V$, the Herman--Kluk propagator $\hat{U}_t^{\rm HK}$ approximates the unitary evolution operator \eqref{EQ:Unitary_evolution_operator}
with an error of the order of $\epsilon$ \cite[Theorem 5.3]{Lasser_Lubich:2020}, that is,
\begin{align}\label{EQ:Norm_accuracy}
    \sup_{t\in[0,\tau]} \vert\vert\hat{U}_t - \hat{U}_t^{\rm HK}\vert\vert_{L^2\rightarrow L^2} \leq C(\tau) \epsilon,
\end{align}
where $\tau>0$ is a fixed time, and the constant $C(\tau)$ is independent of $\epsilon$ and depends only on $\tau$. 
    \item For potentials $V$ that are at most quadratic polynomials, the Herman--Kluk approximation is exact \cite[Corollary 5.9]{Lasser_Lubich:2020}.
    \item For a sub-quadratic potential $V$, the Herman--Kluk prefactor $R_t$ is bounded from above and from below \cite[Lemma 5.6]{Lasser_Lubich:2020}. For a fixed time $\tau>0$, there exist constants $c_1(\tau)> 0$ and $c_2(\tau)>0$ such that
    \begin{align}
       c_1(\tau) \leq  \vert R_t(z) \vert \leq c_2(\tau)
    \end{align}
    for all $z\in \mathbb{R}^{2D}$ and $t\in[0,\tau]$.
\end{enumerate} 

\subsection{The representation for expectation values}
In practice, due to dimension $D\gg 1$, one is often more interested in the expectation values of quantum-mechanical observables than the wavefunction itself.
These expectation values provide information about the norm and measurable quantities such as the positions, momenta, and energies, and can be used for comparison with reference solutions (if they are available).

The expectation value of a self-adjoint operator $\hat{A}$ on $L^2(\mathbb{R}^{D})$ (or some subspace of it) in the normalized state $\psi$ is given by the inner product
\begin{align}\label{EQ:exact_exp_value}
    \langle \psi, \hat{A} \psi \rangle = \int_{\mathbb{R}^{D}} \overline{\psi(x)} (\hat{A}\psi)(x)\,dx.
\end{align}
\begin{remark}
    Let $\psi(t)$ be the solution to the time-dependent Schrödinger equation \eqref{EQ:TDSE} or some approximation of it and let $\hat{\rho}(t) = \vert\psi(t) \rangle \langle \psi(t)\vert$ (in Dirac notation, see \cite[Section 3.12]{Hall:2013}) be the orthogonal projection onto the span of $\psi(t)$. The expectation value of a quantum mechanical operator $\hat{A}$ can be written as (see \cite[Proposition 19.10]{Hall:2013})
    \begin{align}\label{EQ:remark_exp_values}
         \langle \psi(t), \hat{A} \psi(t) \rangle = \rm{Tr} \it{[\hat{\rho}(t) \hat{A}]}.
    \end{align}
    In the chemical literature, often the more general time-correlation function
    \begin{align} \label{EQ:Time_correlation_function}
        C_{\hat{B}\hat{A}}(t) = \rm{Tr}\it{[\hat{B}(t)\hat{A}]}
    \end{align}
    of operators $\hat{A}$ and $\hat{B}(t)$ is of interest \cite{Pollak_Liu:2022, Miller:2001, Sun_Miller:2002}. To simplify the exposition, our focus in this paper is on expectation values \eqref{EQ:remark_exp_values}, but we believe that our results can be extended to the more general case \eqref{EQ:Time_correlation_function}.
\end{remark}

A conventional calculation of the expectation value \eqref{EQ:exact_exp_value} requires finding an (approximate) solution $\psi(t)$ of the time-dependent Schrödinger equation \eqref{EQ:TDSE} and evaluating the inner product using an additional numerical quadrature. 
This is clearly limited to low-dimensional situations.
However, the Herman--Kluk propagator provides a convenient way of computing the expectation values without evaluating the full Herman--Kluk wavefunction $\hat{U}_t^{\rm HK} \psi_0$ for some normalized $\psi_0\in L^2(\mathbb{R}^D)$. 
By considering a double phase space $\mathbb{R}^{2D} \otimes \mathbb{R}^{2D}  \simeq \mathbb{R}^{4D} $, expectation values of an operator $\hat{A}$ within the Herman--Kluk setting can be reformulated as \cite{Lasser_Sattlegger:2017}
\begin{align} \label{EQ:HK_Observable}
    \langle \hat{A} \rangle_t &:=  \int_{\mathbb{R}^{4D}}  f_0(w) \Phi_t(w)  O_t[\hat{A}](w)\,dw
\end{align}
with a new variable $w=(y,z)\in\mathbb{R}^{4D}$ consisting of $y,z\in\mathbb{R}^{2D}$, each coming from a single phase space $\mathbb{R}^{2D}$. 
Here, the integrand is split into an initial-state-dependent and operator- and time-independent factor
\begin{align}\label{EQ:time_independent_part}
\begin{split}
    f_0:\mathbb{R}^{4D} \to \mathbb{C},\quad (y,z) &\mapsto (2\pi \epsilon)^{-2D}\langle \psi_0 , g_y \rangle \langle  g_z, \psi_0 \rangle,
\end{split}
\end{align}
which is identical to $f_0(y,z) = (2\pi\epsilon)^{ - D}  \overline{(\mathcal{T}\psi_0)(y)} (\mathcal{T}\psi_0)(z)$,
a time-dependent, operator- and  initial-state-independent factor 
\begin{align}\label{EQ:Phase_factor}
    \Phi_t: \mathbb{R}^{4D} \to \mathbb{C}, \quad (y,z) \mapsto \overline{R_t(y)} R_t(z) \exp{[i(S_t(z)-S_t(y))/\epsilon]},
\end{align}
and a time- and operator-dependent and initial-state-independent factor
\begin{align}\label{EQ:Time_operator_dependent}
    O_t[\hat{A}]: \mathbb{R}^{4D} \to \mathbb{C}, \quad (y,z) \mapsto\langle g_{y(t)} , \hat{A}  g_{z(t)} \rangle. 
\end{align}
To improve readability, the notation omits the $\epsilon$-dependence of the integrand. 
We introduce the following 
\begin{assumption}\label{Assumption:1}
    Let $\psi_0$ be a normalized initial state coming from  $\mathcal{S}(\mathbb{R}^D)$.
\end{assumption}
This assumption is not too restrictive as in most applications the initial state $\psi_0$ is a coherent ground state of a harmonic oscillator or a Hermite function that belong to $\mathcal{S}(\mathbb{R}^D)$. Therefore, we take \Cref{Assumption:1} from now on for granted. 

In particular, Schwartz functions are contained in the domains of operators $\hat{A}$ we will consider, and (due to the assumed smoothness and growth conditions on the potential function $V$), for all times $t\in \mathbb{R}$ the time-evolved state $\psi(t) =\hat{U}^{\rm HK}_t\psi_0$ is a Schwartz function \cite[Theorem 1]{Swart_Rousse:2008}.

We explore the characteristics of the integrand in \eqref{EQ:HK_Observable}, which will be useful in later sections.
\begin{lemma} \label{Lemma:f0_in_Schwartz}
Let $\tau>0$ be a fixed time and let $\hat{A}=\mathrm{Id}$, $\hat{q}^n_j$, $\hat{p}^n_j$, or $\hat{H}$ for  $n \in \mathbb{N}$, $j \in \{1, \dots , D\}$.

\begin{enumerate}
    \item If \Cref{Assumption:1} is true, then $f_0\in \mathcal{S}(\mathbb{R}^{4D})$.
    \item  There are $\tau$-dependent constants $c_1(\tau), c_2(\tau)>0$ such that
    \begin{align} \label{EQ:Bounds_phi}
        c_1(\tau) \leq \vert \Phi_t(w)\vert \leq c_2(\tau)
    \end{align}
    for all $t\in [0,\tau]$ and $w\in \mathbb{R}^{4D}$.
    \item 
    For all $t\in[0,\tau]$, the absolute square $\vert O_t[\hat{A}](y,z)\vert^2$ of the operator-dependent part
    grows at most polynomially as $\vert y(t) \vert, \vert z(t) \vert \to \infty$.
\end{enumerate}
\end{lemma}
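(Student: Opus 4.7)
The plan is to treat the three items independently, since each depends on a different property of the integrand.

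For Part (i), I would use the factorization $f_0(y,z) = (2\pi\epsilon)^{-D}\,\overline{(\mathcal{T}\psi_0)(y)}\,(\mathcal{T}\psi_0)(z)$ already recorded above. Under \Cref{Assumption:1} the initial state $\psi_0$ lies in $\mathcal{S}(\mathbb{R}^D)$, so the cited continuity of the FBI transform $\mathcal{T}:\mathcal{S}(\mathbb{R}^D)\to\mathcal{S}(\mathbb{R}^{2D})$ places $\mathcal{T}\psi_0$ in $\mathcal{S}(\mathbb{R}^{2D})$. Schwartz space is closed under tensor products of functions in independent variables (an immediate consequence of the seminorm definition), so $f_0$ is a Schwartz function on $\mathbb{R}^{4D}$.

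For Part (ii), the ODE \eqref{EQ:S} with real initial datum $S_0=0$ yields that $S_t(z)$ is real-valued, so the exponential in \eqref{EQ:Phase_factor} has modulus one and $|\Phi_t(w)| = |R_t(y)|\,|R_t(z)|$. The two-sided bound on the Herman--Kluk prefactor in property 3 of \Cref{Sec:Herman_Kluk} then gives $c_1(\tau)^2 \leq |\Phi_t(w)| \leq c_2(\tau)^2$ uniformly on $\mathbb{R}^{4D}\times [0,\tau]$; relabelling constants finishes the argument.

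For Part (iii), I would abbreviate $\alpha:=y(t)$, $\beta:=z(t)$ and evaluate $\langle g_\alpha,\hat{A}g_\beta\rangle$ case by case using explicit Gaussian integrals. After the change of variables $u=x-x_0$ with $x_0=(q_\alpha+q_\beta)/2$, the product $\overline{g_\alpha(x)}g_\beta(x)$ factors as a shifted complex Gaussian in $u$ times the overlap $\langle g_\alpha,g_\beta\rangle$, which satisfies $|\langle g_\alpha,g_\beta\rangle|\leq 1$ by Cauchy--Schwarz. This immediately disposes of $\hat{A}=\mathrm{Id}$. For $\hat{A}=\hat{q}_j^n$, the expansion $x_j^n=(u_j+x_{0,j})^n$ combined with standard Gaussian moments yields a polynomial of degree $n$ in $|\alpha|,|\beta|$ multiplying a bounded overlap. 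For $\hat{A}=\hat{p}_j^n=(-i\epsilon\partial_{x_j})^n$, the identity $\hat{p}_j g_\beta = (p_{\beta,j}+i(\Gamma(x-q_\beta))_j)g_\beta$ and its iteration produce a polynomial of degree $n$ in $x$ and $\beta$ times $g_\beta$, which is then integrated as in the position case.

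The main obstacle is the Hamiltonian case $\hat{A}=\hat{H}$. The kinetic part $\tfrac{1}{2}\sum_j\hat{p}_j^2$ is handled as above, but the multiplicative potential $V(\hat{q})$ is what requires the sub-quadratic hypothesis on $V$ made at the beginning of the paper: boundedness of all derivatives of order $\geq 2$ gives the pointwise estimate $|V(x)|\leq C(1+|x|^2)$. Since $|\overline{g_\alpha}g_\beta|$ is a positive Gaussian in $x$ centered at $x_0$ with covariance proportional to $\epsilon\Gamma^{-1}$, a direct moment calculation bounds $|\langle g_\alpha,V(\hat{q})g_\beta\rangle|$ by a polynomial of degree two in $|\alpha|,|\beta|$. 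Squaring the resulting polynomial bounds delivers the claimed polynomial growth of $|O_t[\hat{A}](y,z)|^2$ in each of the admissible operators; the final estimate is uniform in $t\in[0,\tau]$ because the flow $z\mapsto z(t)$ is itself smooth on this interval.
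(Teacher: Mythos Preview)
Your proof is correct; Parts (i) and (ii) are essentially identical to the paper's argument (FBI transform maps Schwartz to Schwartz plus tensor-product closure, and $|\Phi_t|=|R_t(y)||R_t(z)|$ plus the two-sided prefactor bound).

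In Part (iii) you take a different route. The paper begins with a single application of Cauchy--Schwarz,
\[
|O_t[\hat{A}](y,z)|^2 \leq \|\hat A g_{z(t)}\|^2,
\]
which eliminates the $y$-variable entirely; it then bounds the remaining norm using Gaussian moments (Appendix~\ref{Appendix:Moments_of_Gaussians}), treating $\hat q_j^n$ and $V(\hat q)$ directly in position space and $\hat p_j^n$, $T$ by passing to momentum space. Your approach instead evaluates the matrix element $\langle g_\alpha,\hat A g_\beta\rangle$ itself via the centred change of variables $u=x-\tfrac12(q_\alpha+q_\beta)$ and the explicit lowering identity $\hat p_j g_\beta=(p_{\beta,j}+i(\Gamma(x-q_\beta))_j)g_\beta$, bounding the remaining overlap factor by $1$. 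This yields sharper, two-variable polynomial bounds (in fact essentially the explicit formulas recorded in Appendix~\ref{Appendix:Inner_products}), at the price of a longer case analysis; the paper's Cauchy--Schwarz shortcut is quicker but discards the $y$-dependence. Both arguments rest on the same sub-quadratic hypothesis for the potential term in $\hat H$. Your final remark about uniformity in $t$ is harmless but unnecessary: the statement already phrases the growth in terms of the propagated points $y(t),z(t)$, so no control on the flow is required.
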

\begin{proof}  
${ }$
\begin{enumerate}
    \item 
 If $\psi_0\in \mathcal{S}(\mathbb{R}^D)$, then the FBI transform satisfies $\mathcal{T}\psi_0 \in \mathcal{S}(\mathbb{R}^{2D})$ and $f_0\in \mathcal{S}(\mathbb{R}^{4D})$ as a tensor product of two Schwartz functions.
 \item Since $S_t\in\mathbb{R}$, it holds
    \begin{align}
        \vert \Phi_t(y,z) \vert  = \vert R_t(z) R_t(y)\vert.
    \end{align}
    The Herman--Kluk prefactor $R_t$ is bounded from below and from above. Hence, \eqref{EQ:Bounds_phi} holds true.
\item By the Cauchy-Schwarz inequality, it holds that
   \begin{align}
       \vert O_t[\hat{A}](y,z)\vert^2 = \vert \langle g_{y(t)}, \hat{A} g_{z(t)}  \rangle \vert^2 \leq \vert\vert \hat{A}g_{z(t)}  \vert\vert^2.
   \end{align}
\begin{enumerate}
    \item For $\hat{A}= \mathrm{Id}$ and $\hat{A}=\hat{q}_j^n$, we obtain the bound
\begin{align}
\begin{split}
    \vert O_t[\hat{A}](y,z)\vert^2 &\leq \int_{\mathbb{R}^{D}} x^{2n}_j \exp{\left(-\frac{1}{\epsilon}(x-q(t))^T \Gamma (x-q(t)) \right) } \,dx
    \\& = \textrm{Pol}(q(t)),
\end{split}
\end{align}
because the moments of a Gaussian depend polynomially on its mean. See Appendix~\ref{Appendix:Moments_of_Gaussians} for a discussion of the moments of Gaussians.
\item For a sub-quadratic potential $V$, there is a constant $c>0$ such that\begin{align}
    \vert V(x) \vert \leq \vert V(0) \vert  + \vert \nabla V(0)^T x \vert + c \sum_{j,k=1}^D \vert x_j x_k \vert.  
\end{align}
Hence, there is a polynomial $\textrm{Pol}: \mathbb{R}^D \to \mathbb{R} $ such that $ V(x)^2 \leq \textrm{Pol}(x) $ for all $x\in \mathbb{R}^D$. We obtain
\begin{align}
\begin{split}
    \vert O_t[\hat{A}](y,z)\vert^2 &\leq \int_{\mathbb{R}^{D}} V(x)^2 \exp{\left(-\frac{1}{\epsilon}(x-q(t))^T \Gamma (x-q(t)) \right) } \,dx
    \\ & \leq \int_{\mathbb{R}^{D}} \textrm{Pol}(x) \exp{\left(-\frac{1}{\epsilon}(x-q(t))^T \Gamma (x-q(t)) \right) } \,dx.
\end{split}
\end{align}
The right-hand side of the inequality depends polynomially on the mean $q(t)$. We refer again to Appendix~\ref{Appendix:Moments_of_Gaussians} for a discussion of the moments of a Gaussian. 
\item By changing to the momentum space, similar to the $n$-th order position operator $\hat{q}_j^n$, $\vert O_t[\hat{A}](y,z)\vert^2$ grows at most polynomially for $\hat{A}=\hat{p}_j^n$ as $\vert y(t) \vert \vert z(t) \vert \to \infty$. In particular, this is true for the kinetic energy operator $\hat{A}=T$.
\item Using the triangle inequality, the result follows for the Hamiltonian $\hat{H}=T+V$.
\end{enumerate}
\end{enumerate}
\end{proof}

For single-particle Gaussian-based methods, the error bounds for both wavefunctions and observables were established (see \cite{Hagedorn:1980} for the semiclassical thawed Gaussian approximation and \cite{Ohsawa:2021, Burkhard_Lasser:2023, Hagedorn:1980} for the variational Gaussian approximation). 
For both methods, the accuracy of the expectation values was higher than the wavefunction accuracy measured by the standard $L^2$ Hilbert space norm.
However, for the Herman--Kluk propagator, error bounds for observables have not yet been analyzed in detail. 

We transfer the propagator's norm accuracy to the expectation values in a straightforward manner.
\begin{proposition} \label{Proposition:Accuracy}
    Let $\psi(t) = \hat{U}_t\psi_0$ be the solution to the time-dependent Schrödinger equation \eqref{EQ:TDSE} and let \Cref{Assumption:1} hold.
    For a bounded, self-adjoint operator $\hat{A}$, the integral in \eqref{EQ:HK_Observable} approximates the expectation value $\langle \psi(t) , \hat{A} \psi(t) \rangle$ in the order of $\epsilon$. That is, for a fixed $\tau>0$ there is a constant $c(\tau)$ such that
    \begin{align}
    \sup_{t\in[0,\tau]}\vert \langle  \hat{A} \rangle_t -  \langle \psi(t), \hat{A}\psi(t) \rangle \vert \leq c(\tau) \epsilon.
\end{align}
     In the case of at most quadratic polynomial potentials $V$, the Herman--Kluk approximation becomes exact, i.\,e.,  $\langle  \hat{A} \rangle_t = \langle \psi(t), \hat{A}\psi(t) \rangle $
\end{proposition}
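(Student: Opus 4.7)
The plan is to identify the double phase space integral $\langle \hat{A}\rangle_t$ with the expectation value of $\hat{A}$ in the Herman--Kluk state $\hat{U}^{\rm HK}_t\psi_0$, and then use the operator norm estimate \eqref{EQ:Norm_accuracy} together with boundedness of $\hat{A}$.

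First I would expand $\langle \hat{U}^{\rm HK}_t\psi_0,\hat{A}\hat{U}^{\rm HK}_t\psi_0\rangle$ by inserting the defining integral \eqref{EQ:Herman_Kluk_propagator} into both slots of the inner product and using Fubini to pull the $y,z$ integrations outside. Using $(2\pi\epsilon)^{-D}\langle g_y,\psi_0\rangle = \overline{(2\pi\epsilon)^{-D}\langle\psi_0,g_y\rangle}$ and gathering the prefactors and phases yields exactly $f_0(w)$, $\Phi_t(w)$, and $O_t[\hat{A}](w)$ as defined in \eqref{EQ:time_independent_part}--\eqref{EQ:Time_operator_dependent}, hence
\begin{align}
\langle \hat{A}\rangle_t = \langle \hat{U}^{\rm HK}_t\psi_0,\hat{A}\,\hat{U}^{\rm HK}_t\psi_0\rangle.
\end{align}
Fubini is justified because, under \Cref{Assumption:1}, $f_0\in\mathcal{S}(\mathbb{R}^{4D})$ by \Cref{Lemma:f0_in_Schwartz}(i) and $\Phi_t O_t[\hat{A}]$ grows at most polynomially by parts (ii)--(iii), so the integrand is absolutely integrable (for bounded $\hat{A}$ in fact one can just use Cauchy--Schwarz directly on the inner products).

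Next I would use the standard telescoping identity
\begin{align}
\langle\psi(t),\hat{A}\psi(t)\rangle - \langle\hat{U}^{\rm HK}_t\psi_0,\hat{A}\hat{U}^{\rm HK}_t\psi_0\rangle
= \langle\psi(t)-\hat{U}^{\rm HK}_t\psi_0,\hat{A}\psi(t)\rangle + \langle\hat{U}^{\rm HK}_t\psi_0,\hat{A}(\psi(t)-\hat{U}^{\rm HK}_t\psi_0)\rangle,
\end{align}
together with Cauchy--Schwarz, boundedness of $\hat{A}$, unitarity of $\hat{U}_t$, and the elementary bound $\|\hat{U}^{\rm HK}_t\psi_0\|\le 1+C(\tau)\epsilon$ coming from \eqref{EQ:Norm_accuracy}. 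This gives
\begin{align}
\bigl|\langle\hat{A}\rangle_t - \langle\psi(t),\hat{A}\psi(t)\rangle\bigr| \le \|\hat{A}\|_{L^2\to L^2}\bigl(2+C(\tau)\epsilon\bigr)\,\|\psi(t)-\hat{U}^{\rm HK}_t\psi_0\|_{L^2},
\end{align}
which by \eqref{EQ:Norm_accuracy} is bounded by $c(\tau)\epsilon$ uniformly on $[0,\tau]$, establishing the first claim.

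For the second claim, when $V$ is an at most quadratic polynomial, Property 2 from Section \ref{Sec:Herman_Kluk} (exactness of the Herman--Kluk approximation) gives $\hat{U}^{\rm HK}_t\psi_0=\hat{U}_t\psi_0=\psi(t)$, and combining this with the identification $\langle\hat{A}\rangle_t=\langle\hat{U}^{\rm HK}_t\psi_0,\hat{A}\hat{U}^{\rm HK}_t\psi_0\rangle$ from the first step yields equality. The main (modest) obstacle is the identification step: it requires carefully matching the two-fold integral of the double-slot insertion with the definitions of $f_0$, $\Phi_t$, and $O_t[\hat{A}]$, including the conjugation in the bra-slot and the signs in the action difference $S_t(z)-S_t(y)$; once this bookkeeping is done, the rest is a routine perturbation estimate.
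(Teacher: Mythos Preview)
Your proposal is correct and follows essentially the same approach as the paper: identify $\langle\hat{A}\rangle_t$ with $\langle\hat{U}^{\rm HK}_t\psi_0,\hat{A}\hat{U}^{\rm HK}_t\psi_0\rangle$, then telescope and bound using Cauchy--Schwarz, boundedness of $\hat{A}$, and the norm accuracy \eqref{EQ:Norm_accuracy}. The only minor differences are that the paper simply recalls the identification (rather than justifying it via Fubini) and cites a separate norm bound for $\|\hat{U}^{\rm HK}_t\psi_0\|$ from \cite{Lasser_Lubich:2020} instead of deducing it from \eqref{EQ:Norm_accuracy} as you do.
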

\begin{proof}
Let $u(t) = \hat{U}^{\rm HK}_t \psi_0$ be the Herman--Kluk wavefunction \eqref{EQ:Herman_Kluk_propagator} and recall that $\langle \hat{A} \rangle_t = \langle u(t), \hat{A}u(t) \rangle$. Using the triangle inequality, we obtain
\begin{align}
    \begin{split}
        \vert \langle \hat{A} \rangle_t - \langle \psi(t), \hat{A} \psi(t) \rangle \vert &= \vert \langle u(t) - \psi(t), \hat{A} u(t) \rangle + \langle \hat{A}\psi(t), u(t) - \psi(t) \rangle \vert 
        \\& \leq \vert\vert u(t) - \psi(t) \vert\vert \cdot \vert\vert \hat{A}\vert\vert\cdot (\vert\vert u(t)\vert\vert + \vert\vert \psi(t)\vert \vert)
        \\& \leq c(\tau) \epsilon,
    \end{split}
\end{align}
where the last step follows from the norm accuracy of the wavefunction, the boundedness of $\hat{A}$ and the norm bound of the Herman--Kluk wavefunction (see \cite[Proposition 5.4]{Lasser_Lubich:2020}).
\end{proof}

The error bound in \Cref{Proposition:Accuracy} might not be sharp. The proof presented here is naive since averaging effects of Gaussian wavepackets, used in the error bounds of single-particle Gaussian-based methods \cite{Lasser_Lubich:2020, Ohsawa:2021, Burkhard_Lasser:2023}, are not employed. It is an open question whether these beneficial averaging effects carry over to the Herman--Kluk approximation.

\subsection{Computational tasks}
The inner product \eqref{EQ:exact_exp_value} that defines the expectation value is interpreted as an integral on the double phase space $\mathbb{R}^{2D}\otimes\mathbb{R}^{2D} \simeq \mathbb{R}^{4D}$ and it can be computed with a single numerical quadrature. 
The operator $\hat{A}$ appears in the inner product \eqref{EQ:Time_operator_dependent} of two frozen Gaussians with different centres, which either simplifies the numerical quadrature or even makes it dispensable if explicit formulas are known.

To evaluate the double phase-space integral \eqref{EQ:HK_Observable} numerically, we have the following two tasks:
\begin{enumerate}
    \item The integral over $\mathbb{R}^{4D}$ has to be discretized with some quadrature rule.
    \item The integrand must be obtained from the ordinary differential equations \eqref{EQ:Traj}, \eqref{EQ:S}, and \eqref{EQ:Stab_mat}.
\end{enumerate} 
The ODEs can be solved using a symplectic integration method to preserve the symplectic structure of the underlying classical Hamiltonian system \cite{Brewer_Hulme:1997, hairer_lubich_wanner:2003, book_Hairer_Wanner:2006}. 
The computational bottleneck is the discretization of the double phase space $\mathbb{R}^{4D}$, whose dimension grows linearly with the number of dimensions $D$. 
Because of the high dimensionality and because the frozen Gaussian wavepackets can be combined with probabilistic sampling techniques, standard approaches resort to Monte-Carlo or quasi Monte-Carlo quadrature \cite{Caflisch:1998}. 
In \cite{Xie_Zhou:2021, Huang_Zhou:2022}, the authors considered Monte Carlo estimators for Herman--Kluk expectation values that scale quadratically with the number of quadrature points.
However, it is possible to construct an estimator that scales linearly with the number of quadrature points due to the interpretation $\mathbb{R}^{2D}\otimes\mathbb{R}^{2D} \simeq \mathbb{R}^{4D}$ as we will show here.

\section{Probabilistic space discretization}\label{Sec:2}
We aim to approximate the integral 
\begin{align}
\begin{split}
   \langle \hat{A} \rangle_t = \int_{\mathbb{R}^{4D}}f_0(w) \Phi_t(w) O_t[\hat{A}](w) \,dw \approx A_N(t)
\end{split}
\end{align}
by employing a probabilistic discretization technique \cite{Caflisch:1998, Liu:2004, MacKay:2005, Henning_Kersting:2022} involving $N$ quadrature points.
In the spirit of importance sampling, we introduce an arbitrary probability density $\rho > 0$ on $\mathbb{R}^{4D}$. Although the density $\rho$ may vanish on Lebesgue null sets, we omit to mention this for simplicity in the present and upcoming sections.
\begin{definition}[Monte Carlo Estimator]
For a probability density $\rho>0$ on $\mathbb{R}^{4D}$, we call
    \begin{align} \label{EQ:crude_MC_estimator}
    A_N(t) = \frac{1}{N} \sum_{j=1}^N \frac{ f_0(w_j) }{\rho(w_j)}\Phi_t(w_j)O_t[\hat{A}](w_j)
    \end{align}
the \emph{crude Monte Carlo estimator} of $\langle \hat{A} \rangle_t$ at time $t$ for which the samples $w_j\in\mathbb{R}^{4D}$ are independent and identically distributed with respect to $\rho$. 
\end{definition}
This estimator is unbiased, and the strong law of large numbers \cite[Theorem 2.4.1]{durrett:2019} provides the almost sure convergence 
\begin{align}
    A_N(t) \rightarrow \langle \hat{A} \rangle_t, \quad N\rightarrow\infty, 
\end{align}
whenever the function $f_0\Phi_t O_t[\hat{A}]: \mathbb{R}^{4D} \mapsto \mathbb{C}$ is integrable.

In particular, the convergence is independent of the chosen probability density $\rho$. 
\begin{lemma}\label{Proposition:Convergence_estimator}
    Let $n\in \mathbb{N}$, $j \in \{1,\dots, D\}$ and $\hat{A}= \mathrm{Id}$, $\hat{q}^n_j$, $\hat{p}^n_j$ or $\hat{H}$. 
    Let $\tau>0$ be a fixed time. 
    For all times $t\in [0,\tau]$, \Cref{Assumption:1} is sufficient for the almost sure convergence of the crude Monte Carlo estimator $A_N(t)$, as defined in \eqref{EQ:crude_MC_estimator}.
\end{lemma}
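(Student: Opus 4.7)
The plan is to invoke the strong law of large numbers, which reduces the claim to checking that the integrand
\[
f_0(w)\,\Phi_t(w)\,O_t[\hat A](w)
\]
is Lebesgue-integrable over $\mathbb{R}^{4D}$ for every fixed $t\in[0,\tau]$. The three factors will be controlled separately using the structural information already collected in \Cref{Lemma:f0_in_Schwartz}. Since the samples $w_j$ are i.i.d.\ with density $\rho$ and the estimator is defined via $f_0/\rho$, integrability of $f_0\Phi_t O_t[\hat A]$ (equivalently, finiteness of $\mathbb{E}|f_0(w)\Phi_t(w)O_t[\hat A](w)/\rho(w)|$ for $w\sim\rho$) gives the almost sure convergence $A_N(t)\to\langle\hat A\rangle_t$ independently of the choice of $\rho$.

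The three ingredients from \Cref{Lemma:f0_in_Schwartz} are: $f_0\in\mathcal{S}(\mathbb{R}^{4D})$ under \Cref{Assumption:1}; $|\Phi_t(w)|\le c_2(\tau)$ uniformly in $w$ and $t\in[0,\tau]$; and $|O_t[\hat A](y,z)|^2$ grows at most polynomially in $|y(t)|,|z(t)|$ for each of the listed operators. Putting these together, for every $t\in[0,\tau]$ there is a polynomial $P$ with
\[
|f_0(w)\,\Phi_t(w)\,O_t[\hat A](w)|\;\le\; c_2(\tau)\,|f_0(w)|\,\bigl(1+P(|y(t)|,|z(t)|)\bigr)^{1/2}.
\]

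The only nontrivial step is to translate the polynomial control in the \emph{evolved} phase-space variables $y(t),z(t)$ back to the \emph{initial} variables $y,z$, which are the true integration variables. Here the sub-quadratic growth of $V$ enters: $\nabla V$ and $\mathrm{Hess}\,h$ have at most linear growth, so Hamilton's equations \eqref{EQ:Traj} yield, via Gronwall's inequality applied on the compact interval $[0,\tau]$, a bound of the form $|z(t)|\le C(\tau)(1+|z|)$ uniformly in $t\in[0,\tau]$, and analogously for $y(t)$. Substituting this into the polynomial $P$ yields a polynomial $\tilde P$ in $|y|,|z|$ majorizing $P(|y(t)|,|z(t)|)$. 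This is the main obstacle in the argument; everything else is routine, but this step relies essentially on the global sub-quadratic hypothesis placed on $V$ at the start of the paper.

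With this bound, the integrand is dominated, up to a constant depending on $\tau$, by $|f_0(w)|\bigl(1+\tilde P(|y|,|z|)\bigr)^{1/2}$. Since $f_0\in\mathcal{S}(\mathbb{R}^{4D})$ decays faster than the reciprocal of any polynomial, the product is integrable on $\mathbb{R}^{4D}$. The strong law of large numbers then gives $A_N(t)\to\langle\hat A\rangle_t$ almost surely for each $t\in[0,\tau]$, and the almost sure convergence is inherited for every $t$ in the stated range. No additional hypothesis on $\rho$ beyond positivity is needed, because the Monte Carlo expectation $\mathbb{E}[f_0(w)\Phi_t(w)O_t[\hat A](w)/\rho(w)]$ reduces to $\int f_0\Phi_t O_t[\hat A]\,dw$ by construction, so the integrability just established is exactly what the SLLN requires.
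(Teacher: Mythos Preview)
Your proof is correct and follows essentially the same route as the paper: reduce to integrability of $f_0\Phi_t O_t[\hat A]$ via the three items of \Cref{Lemma:f0_in_Schwartz}, then invoke the strong law of large numbers. You are in fact more careful than the paper on one point: the paper simply concludes finiteness ``since $f_0\in\mathcal{S}(\mathbb{R}^{4D})$,'' whereas you correctly observe that the polynomial bound on $O_t[\hat A]$ is stated in the \emph{evolved} variables $y(t),z(t)$ and supply the Gronwall argument (using the sub-quadratic hypothesis on $V$) to transfer it to the integration variables $y,z$.
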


\begin{proof}
    Recall $f_0$, $\Phi_t$ and $O_t[\hat{A}]$ as defined in \eqref{EQ:time_independent_part}, \eqref{EQ:Phase_factor} and \eqref{EQ:Time_operator_dependent}. 
    By \Cref{Lemma:f0_in_Schwartz}, $f_0\in \mathcal{S}(\mathbb{R}^{4D})$ and there is a $\tau$-dependent constant $c(\tau)>0$ such that 
    \begin{align}
    \vert \Phi_t(y,z) \vert = \vert R_t(y) R_t(z) \vert \leq c(\tau)
    \end{align}
    for all $(y,z)\in\mathbb{R}^{4D}$ and $t\in[0,\tau]$, and
    \begin{align}
         \vert O_t[\hat{A}](y,z)\vert =\vert \langle g_{y(t)}, \hat{A}g_{z(t)} \rangle \vert
    \end{align}
    grows at most polynomially as $\vert y(t) \vert, \vert z(t) \vert \to \infty$. We deduce
    \begin{align}
        \int_{\mathbb{R}^{4D}} \vert f_0(w) \Phi_t(w) O_t[\hat{A}](w)  \vert \,dw \leq c(\tau)\int_{\mathbb{R}^{4D}} \vert f_0(w) O_t[\hat{A}](w)\vert \,dw < \infty,
    \end{align}
    since $f_0 \in \mathcal{S}(\mathbb{R}^{4D})$.
\end{proof}

The accuracy of the crude Monte Carlo estimator \eqref{EQ:crude_MC_estimator} is given by the mean squared error
\begin{align}
    \mathbb{E}\left[\vert A_N(t) - \langle \hat{A} \rangle_t \vert^2\right] = \frac{\mathbb{V}_t[\hat{A}, \psi_0, \rho]}{N},
\end{align}
whenever the variance \cite[Section 5.3]{Lasser_Sattlegger:2017}
\begin{align}\label{EQ:NotationVariance}
\begin{split}
    \mathbb{V}_t[\hat{A}, \psi_0, \rho] &= \int_{\mathbb{R}^{4D}} \left\vert \frac{f_0(w) \Phi_t(w)  O_t[\hat{A}](w)}{\rho(w)} - \langle \hat{A} \rangle_t \right\vert^2 \rho(w)\,dw
     \\&=\int_{\mathbb{R}^{4D}} \frac{1}{\rho(w)} \left\vert f_0(w) \Phi_t(w)  O_t[\hat{A}](w) \right\vert^2\, dw - \vert \langle \hat{A} \rangle_t \vert^2
 \end{split}
\end{align}
is well-defined. 
To minimize the computational cost, i.\,e., to obtain the most accurate result with a given number of samples $N$, one should choose the probability density $\rho$ such that the above variance is minimized. 
To do so, one could try to minimize the variance at the initial time $t=0$ when the auxiliary function $\Phi_t$ defined in \eqref{EQ:Phase_factor} satisfies $\Phi_0 =1$. 

In the remainder of this section, we analyse several sampling approaches in terms of their variance.

\subsection{Husimi and sqrt-Husimi sampling}\label{Sec:2.2}
There are an infinite number of possibilities for choosing the sampling density $\rho$.
In this section, we introduce the Husimi and sqrt-Husimi sampling approaches. 
The Husimi function is a commonly used sampling method in the context of the Herman--Kluk approximation \cite{Conte_Ceotto:2019, Sun_Miller:2002}, because for time-correlation functions of the form \eqref{EQ:Time_correlation_function} and for wavepacket autocorrelation functions $\langle \psi_0, \psi(t) \rangle$ the integrand is quadratic in the initial state $\psi_0$. For the time propagation of the wavefunction \eqref{EQ:Herman_Kluk_propagator} itself, the sqrt-Husimi approach was originally used in \cite{Kluk_Davis:1986}, first analyzed in \cite{Lasser_Sattlegger:2017}, and a systematic comparison with the Husimi approach is given in \cite{Kroeninger_Lasser_Vanicek:2023}.

\begin{definition}[Husimi and sqrt-Husimi densities, {\cite[Chapter 6.3]{Lasser_Lubich:2020}}]
The \emph{Husimi function} $\rho_{\mathrm{H}}$ of an initial state $\psi_0$ is defined as \cite[Chapter 6.3]{Lasser_Lubich:2020} 
\begin{align}\label{EQ:Husimi}
    \begin{split}
    \rho_{\rm H}(z)  &:= \vert  (\mathcal{T}\psi_0)(z)\vert^2
    \\&=(2\pi\epsilon)^{-D}\vert \langle g_z , \psi_0 \rangle \vert^2.
    \end{split}    
\end{align}
  Under \Cref{Assumption:1}, we call
    \begin{align}
    \begin{split}
    \rho_{\textup{sqrt-H}}(z) &:= \frac{(2\pi\epsilon)^{-D}}{\kappa_{\textup{sqrt-H}}}\vert\langle \psi_0 , g_z \rangle \vert,
    \end{split}
    \end{align}
the \emph{sqrt-Husimi density} with normalizing constant 
\begin{align}
\kappa_{\textup{sqrt-H}} :=  (2\pi\epsilon)^{-D}\int_{\mathbb{R}^{2D}} \vert\langle \psi_0 , g_{\zeta} \rangle \vert \,d\zeta.
\end{align}  
\end{definition}
By construction, $\rho_{\rm H}$ is a probability density on a single phase space $\mathbb{R}^{2D}$ for all initial data $\psi_0$ coming from $L^2(\mathbb{R}^D)$ while \Cref{Assumption:1} is sufficient for $\rho_{\textup{sqrt-H}}$ to be well-defined. 
Using the Husimi density on each phase space leads to the crude Monte Carlo estimator
\begin{align} \label{EQ:Case_H}
    A_N(t) = \frac{1}{N} \sum_{j=1}^N \frac{ f_0(w_j) }{\rho^{\rm dbl}_{\rm H}(w_j)}\Phi_t(w_j)O_t[\hat{A}](w_j) \tag{\text{Case H}}
\end{align}
with the shorthand notation $\rho^{\rm dbl}_{\rm H}:= \rho_{\rm H}\otimes \rho_{\rm H}$ and with independent $w_j\in\mathbb{R}^{4D}$ distributed with respect to $\rho^{\rm dbl}_{\rm H}$.
By the definition $f_0(w)= (2\pi\epsilon)^{-2D} \langle\psi_0, g_y \rangle \langle g_z , \psi_0 \rangle$ (see \eqref{EQ:time_independent_part}) and the Husimi function \eqref{EQ:Husimi}, the variance \eqref{EQ:NotationVariance} of the estimator \eqref{EQ:Case_H} reduces to
\begin{align} \label{EQ:Variance_Husimi}
\begin{split}
    \mathbb{V}_t[\hat{A}, \psi_0, \rho^{\rm dbl}_{\rm H}  ] &=\int_{\mathbb{R}^{4D}} \frac{\vert f_0(w) \vert^2}{\rho^{\rm dbl}_{\rm H}(w)} \left\vert \Phi_t(w)  O_t[\hat{A}](w) \right\vert^2\, dw - \vert \langle \hat{A} \rangle_t \vert^2
    \\&=(2\pi\epsilon)^{-2D} \int_{\mathbb{R}^{4D}} \vert \Phi_t(w)  O_t[\hat{A}](w) \vert^2 \,dw - \vert\langle \hat{A} \rangle_t \vert^2.
\end{split}
\end{align}

Employing $\rho_{\textup{sqrt-H}}$ instead of $\rho_{\rm H}$ on each phase space, we obtain the estimator
\begin{align} \label{EQ:Case_sqrt_H}
    A_N(t) = \frac{1}{N} \sum_{j=1}^N \frac{ f_0(w_j) }{\rho^{\rm dbl}_{\textup{sqrt-H}}(w_j)} \Phi_t(w_j)O_t[\hat{A}](w_j) \tag{\text{Case sqrt-H}}
\end{align}
where $\rho^{\rm dbl}_{\textup{sqrt-H}}:= \rho_{\textup{sqrt-H}} \otimes \rho_{\textup{sqrt-H}}$ and independent $w_j\sim \rho^{\rm dbl}_{\textup{sqrt-H}}$.
Here, the variance \eqref{EQ:NotationVariance} of the estimator \eqref{EQ:Case_sqrt_H} reduces to
\begin{align}\label{EQ:Variance_sqrtH}
\begin{split}
    \mathbb{V}_t[\hat{A}, \psi_0, \rho^{\rm dbl}_{\textup{sqrt-H}}] &=  \int_{\mathbb{R}^{4D}} \frac{\vert f_0(w) \vert^2}{\rho^{\rm dbl}_{\textup{sqrt-H}}(w)} \left\vert \Phi_t(w)  O_t[\hat{A}](w) \right\vert^2\, dw - \vert \langle \hat{A} \rangle_t \vert^2
    \\&=\kappa_{\textup{sqrt-H}}^2 \int_{\mathbb{R}^{4D}} \vert  f_0(w)\vert \vert \Phi_t(w) \vert^2 \vert O_t[\hat{A}](w)\vert^2 \,dw - \vert \langle \hat{A} \rangle_t \vert^2.
    \end{split}
\end{align}

We obtain
\begin{proposition}\label{Lemma:5}
Let $\tau>0$ be a fixed time, $n\in \mathbb{N}$, $j \in \{1,\dots, D\}$ and $\hat{A}=\mathrm{Id}$, $\hat{q}^n_j$, $\hat{p}^n_j$, $T$, $V$ or $\hat{H}$. For all times $t\in[0,\tau]$
\begin{enumerate}
    \item the variance \eqref{EQ:Variance_Husimi} of the Husimi approach is finite if and only if $\hat{A}=0$; and
    \item the variance \eqref{EQ:Variance_sqrtH} of the sqrt-Husimi approach is finite if \Cref{Assumption:1} is satisfied.
\end{enumerate}  
\end{proposition}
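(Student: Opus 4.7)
The sqrt-Husimi case follows directly from \Cref{Lemma:f0_in_Schwartz}: the integrand of the first term in \eqref{EQ:Variance_sqrtH} is the product $|f_0|\,|\Phi_t|^2\,|O_t[\hat A]|^2$ of a rapidly decreasing function (since $f_0\in \mathcal S(\mathbb R^{4D})$), a uniformly bounded one on $[0,\tau]$, and a function of at most polynomial growth, hence is integrable, while $|\langle\hat A\rangle_t|^2<\infty$ by \Cref{Proposition:Convergence_estimator}.

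For the Husimi case, my plan starts with the observation that the weight $|f_0|^2/\rho^{\rm dbl}_{\rm H}$ collapses to the constant $(2\pi\epsilon)^{-2D}$, so the variance formula \eqref{EQ:Variance_Husimi} is independent of $\psi_0$. Using the two-sided bound on $|\Phi_t|$, finiteness of the variance is equivalent to finiteness of
\[
I(\hat A,t):=\int_{\mathbb R^{4D}}|O_t[\hat A](y,z)|^2\,d(y,z).
\]
Two reductions simplify $I(\hat A,t)$ considerably. First, since the Hamiltonian flow is symplectic and hence volume-preserving on each factor $\mathbb R^{2D}$, the change of variables $u=y(t)$, $v=z(t)$ removes the time dependence:
\[
I(\hat A,t)=\int_{\mathbb R^{4D}}|\langle g_u,\hat A g_v\rangle|^2\,d(u,v).
\]
Second, Fubini together with the $L^2$-isometry of the FBI transform, applied to $\varphi=\hat A g_v$ in the inner integral over $u$, gives
\[
I(\hat A,t)=(2\pi\epsilon)^{D}\int_{\mathbb R^{2D}}\|\hat A g_v\|^2\,dv.
\]
The problem is thus reduced to showing that the nonnegative function $v\mapsto\|\hat A g_v\|^2$ is not integrable on $\mathbb R^{2D}$ for each operator on the list.

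The main obstacle, and the only case-by-case work, is this last step; I expect to dispatch it by producing an explicit lower bound with non-integrable tail. For $\hat A=\mathrm{Id}$ one has $\|g_v\|^2\equiv 1$. For $\hat A=\hat q_j^n$ and $\hat A=\hat p_j^n$, Gaussian moment formulas (cf.\ Appendix~\ref{Appendix:Moments_of_Gaussians}) express $\|\hat A g_v\|^2$ as a strictly positive polynomial in $q_v$ (respectively $p_v$) with positive constant term. For the kinetic energy $T=-\tfrac{\epsilon^2}{2}\Delta$, an analogous computation yields a polynomial in $p_v$ of degree four. For the potential $V$, Tonelli's theorem gives $\int\|Vg_v\|^2\,dv=\int V(x)^2\int|g_v(x)|^2\,dv\,dx$, and the inner integral is infinite for every fixed $x$ because $|g_v(x)|^2$ is independent of $p_v$. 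For $\hat H=T+V$, I would use $\|\hat H g_v\|\ge|\langle g_v,\hat H g_v\rangle|$; the sub-quadratic growth of $V$ makes the kinetic contribution $\tfrac12|p_v|^2$ dominant as $|p_v|\to\infty$, so $\langle g_v,\hat H g_v\rangle\to\infty$ and the integral diverges. In every case $I(\hat A,t)=\infty$, proving that the Husimi variance is infinite.
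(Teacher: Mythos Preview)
Your proof is correct and follows essentially the same route as the paper: the same reduction via the two-sided bound on $|\Phi_t|$, the same symplectic change of variables to remove the time dependence, and the same use of the FBI isometry to reduce to non-integrability of $v\mapsto\|\hat A g_v\|^2$ on $\mathbb R^{2D}$. The sqrt-Husimi case is also handled identically.

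The one noteworthy variation is your treatment of $\hat H$. The paper computes $\hat H g_z$ explicitly via the Hessian of the Gaussian, then uses $|a+ib|^2\ge b^2$ to isolate the cross term $(p^T\Gamma(x-q))^2$ and show its integral diverges. Your argument via $\|\hat H g_v\|\ge|\langle g_v,\hat H g_v\rangle|$ and the observation that $\langle g_v,\hat H g_v\rangle=\tfrac12|p_v|^2+C(q_v)$ (so that the square is a degree-four polynomial in $p_v$ for each fixed $q_v$) is arguably cleaner and avoids the explicit derivative computation; both arrive at the same conclusion with comparable effort.
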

\begin{proof}
${ }$
\begin{enumerate}
    \item 
For the Husimi approach, it is enough to analyze the integral
\begin{align}
    \int_{\mathbb{R}^{4D}} \vert \Phi_t(w) O_t[\hat{A}](w) \vert^2 \, dw 
\end{align}
since $\vert \langle \hat{A} \rangle_t\vert^2 < \infty $. By \Cref{Lemma:f0_in_Schwartz} there are $\tau$-dependent constants $c_1, c_2>0$ such that
    \begin{align}
        c_1 \int_{\mathbb{R}^{4D}} \vert O_t[\hat{A}](w) \vert^2 \,dw \leq \int_{\mathbb{R}^{4D}} \vert \Phi_t(w) O_t[\hat{A}](w) \vert^2 \,dw \leq c_2 \int_{\mathbb{R}^{4D}} \vert O_t[\hat{A}](w) \vert^2 \,dw.
    \end{align}
Therefore, the variance $\mathbb{V}_t[\hat{A}, \psi_0, \rho_{\rm H}^{\rm dbl}]$ is finite if and only if $\vert O_t[\hat{A}](w)\vert^2$ is integrable.
Because the phase-space points $y(t)$ and $z(t)$ are solutions to Hamiltonian systems, the simplecticity of the classical flows reduces the time-dependent integral to a time-independent one, that is,
\begin{align}
\begin{split} \label{EQ:58}
    \int_{\mathbb{R}^{4D}} \vert O_t[\hat{A}](y,z) \vert^2 \,d(y,z) &= \int_{\mathbb{R}^{4D}} \vert \langle g_{y(t)}, \hat{A} g_{z(t)} \rangle \vert^2 \,d(y,z) 
    \\&= \int_{\mathbb{R}^{4D}} \vert \langle g_{y}, \hat{A} g_{z} \rangle \vert^2 \,d(y,z)
    \\& = \int_{\mathbb{R}^{2D}} \vert\vert \hat{A}g_z \vert\vert^2_{L^2(\mathbb{R}^D)} \,dz,
\end{split}
\end{align}
where we also used the isometry of the FBI transform in the last step.
\begin{enumerate}
    \item 
If $\hat{A}=0$, the integrals vanish and, therefore, the variance \eqref{EQ:Variance_Husimi}.
\item
 Let $\hat{A}=\mathrm{Id}$ or $\hat{q}_j^n$. In that case, $\vert \hat{A} g_z \vert^2$ is independent of the phase-space variable $p$ and by the non-negativity, Fubini's theorem leads to 
\begin{align}
\begin{split}
    &\int_{\mathbb{R}^{4D}} \vert O_t[\hat{A}](y,z) \vert^2 \,d(y,z) 
    \\&=\left(\frac{\det \Gamma}{\pi^D\epsilon^D} \right)^{1/2} \int_{\mathbb{R}^{D}} 1\,dp \int_{\mathbb{R}^{D}} x_j^{2n} \int_{\mathbb{R}^{D}} 
      \exp{\left(-\frac{1}{\epsilon}(x-q)^T \Gamma (x-q)  \right)}\,dq\,dx 
      \\& = \int_{\mathbb{R}^{D}} 1\,dp \int_{\mathbb{R}^{D}}  x_j^{2n} \,dx.
\end{split}
\end{align}
Hence, the variance is unbounded.
\item 
With a similar argument, $\vert O_t[\hat{A}](w)\vert^2$ is not integrable for $\hat{A}=V$ when $V\neq 0$. 
\item 
By changing to the momentum representation, $\vert O_t[\hat{A}](y,z) \vert^2$ is not integrable for $\hat{A}=\hat{p}_j^n$. In particular, it is not integrable for the kinetic energy operator $\hat{A}=T$.
\item
For the Hamiltonian $\hat{H}$, we first consider the Hessian of $g_z$, parameterized as in \eqref{EQ:Gaussian}, with respect to the spatial coordinate $x\in\mathbb{R}^D$; 
\begin{align}
    \text{Hess }g_z(x) = -\frac{g_z(x)}{\epsilon^2} \left[ \epsilon\Gamma - (\Gamma (x-q) - ip)(\Gamma (x-q) - ip)^T \right].
\end{align}
For the Hamiltonian $\hat{H}=-\epsilon^2\Delta/2 + V $ applied to the Gaussian $g_z$ we then obtain
\begin{align}
    \begin{split}
    \hat{H}g_z(x) &= V(x) g_z(x) - \frac{\epsilon^2}{2}\text{Tr}(\text{Hess }g_z(x))
    \\& = g_z(x) \left[ V(x)  + \frac{\epsilon}{2}\text{Tr}(\Gamma) - \frac{1}{2}( \Gamma (x-q) - ip )^T( \Gamma (x-q) - ip ) \right].
    \end{split}
\end{align}
Using the estimate $\vert a+ib\vert^2 \geq b^2$, $a,b \in \mathbb{R}$, we obtain the lower bound
\begin{align}
    \begin{split}
    &\int_{\mathbb{R}^{2D}}  \vert \vert \hat{H}g_z \vert \vert^2_{L^2} \,dz 
    \\&\geq \left(\frac{\det \Gamma}{\pi^D \epsilon^D} \right)^{1/2} \int_{\mathbb{R}^{3D}} (p^T \Gamma (x-q))^2 \exp{\left( -\frac{1}{\epsilon} (x-q)^T \Gamma (x-q) \right)} \,d(x,q,p)
    \\& = \left(\frac{\det \Gamma}{\pi^D \epsilon^D} \right)^{1/2} \int_{\mathbb{R}^{3D}} (p^T \Gamma x)^2 \exp{\left( -\frac{1}{\epsilon} x^T \Gamma x \right)} \,d(x,q,p)
    \end{split}
\end{align}
where the right-hand side is infinite for any potential $V$.
\end{enumerate}
This proves the statement for the variance \eqref{EQ:Variance_Husimi} of the Husimi approach.
\item
The proof of the finite variance \eqref{EQ:Variance_sqrtH} of the sqrt-Husimi approach analogously follows the proof of \Cref{Proposition:Convergence_estimator}.
\end{enumerate}
\end{proof}
We give some simple examples that illustrate \Cref{Lemma:5}.
\begin{example}\label{Ex:Variance_sqrtH}
\begin{enumerate}
    \item For the Husimi approach, we recall equality \eqref{EQ:58} and consider $\hat{A}=\textup{Id}$. The integrals reduce to 
    \begin{align}
    \begin{split}
        \int_{\mathbb{R}^{4D}} \vert \langle g_{y}, g_{z} \rangle \vert^2 \,d(y,z) &= \int_{\mathbb{R}^{4D}}  \exp{\left(-\frac{1}{2\epsilon} (z-y)^T  \begin{pmatrix}
            \Gamma & 0 \\ 0 & \Gamma^{-1}
        \end{pmatrix}  (z-y) \right)} \,d(y,z)
        \\& = (2\pi\epsilon)^{2D} \int_{\mathbb{R}^{2D}} 1 \,dy,
    \end{split}
    \end{align}
     which clearly diverges.
     \item For $z_0=(q_0,p_0)\in\mathbb{R}^{2D}$, let $\psi_0=g_{z_0}$.
Then $\rho_{\textup{sqrt-H}}$ is a Gaussian 
\begin{align}
    \rho_{\textup{sqrt-H}}(z) = (4\pi\epsilon)^{-D} \exp{\left(-\frac{1}{4\epsilon} (z-z_0)^T  \Sigma_0  (z-z_0)\right)}
\end{align}
on a single phase space $\mathbb{R}^{2D}$ with width matrix $\Sigma_0 = \mathrm{diag}(\Gamma, \Gamma^{-1})\in\mathbb{R}^{2D \times 2D}$.
\begin{enumerate}
    \item For the identity operator $\hat{A}=\mathrm{Id}$, the variance is given by
\begin{align}\label{Variance:Norm_sqrtH}
\begin{split}
    \mathbb{V}_0[\textup{Id}, g_{z_0}, \rho^{\rm dbl}_{\textup{sqrt-H}}] &= 4^D   \int_{\mathbb{R}^{4D}} \vert f_0(w) \vert \vert O_0[\textup{Id}](w)\vert^2 \,dw - \vert \langle \textup{Id} \rangle_0 \vert^2
    \\& = \left(\frac{16}{5} \right)^D -1.
\end{split} 
\end{align}
\item Considering the position operator $\hat{A} = \hat{q}_j$ for some $j\in\{1,2,\dots, D\}$ and assuming that the initial width matrix is diagonal, i.\,e., $\Gamma = \mathrm{diag}(\gamma_1, \gamma_2, \dots , \gamma_D)$, $\gamma_j>0$, it holds that
\begin{align} \label{Variance:sqrt-H_position}
   \mathbb{V}_0[\hat{q}_j, g_{z_0}, \rho^{\rm dbl}_{\textup{sqrt-H}}]
       = \frac{1}{4} \left(\frac{16}{5}\right)^D\left( 4q_{0,j}^2 + \frac{24}{5}\epsilon\gamma^{-1}_j  \right) - q_{0,j}^2,
\end{align}
where the subscript $j$ denotes the $j$-th component of a vector.
\end{enumerate}
 The derivations can be found in Appendix~\ref{Appendix:5}.
\end{enumerate}
\end{example}

The Husimi approach has the disadvantage of an unbounded variance \eqref{EQ:Variance_Husimi} of the estimator \eqref{EQ:Case_H}. In contrast, \Cref{Assumption:1} provides a sufficient condition to have a finite variance \eqref{EQ:Variance_sqrtH} for the sqrt-Husimi approach.

\Cref{Ex:Variance_sqrtH} shows an unfortunate exponential dependence of the variance of the sqrt-Husimi approach on the dimension $D$. 
Moreover, for the position operator, the exponential $D$-dependent part of the variance is multiplied by a polynomial depending on the initial position, the semiclassical parameter $\epsilon$, and the width $\Gamma$. However, it is robust in the semiclassical limit $\epsilon \to 0$. For applications with small initial values of $\gamma_j$, the included $\gamma_j^{-1}$ could lead to extensive calculations.

We conclude this section by stating that even if the Husimi approach is applicable (i.\,e., one has the existence of the first moment), one can and should directly use the sqrt-Husimi approach, as the finite variance will result in a faster convergence compared to the Husimi approach. This was also observed for the Herman--Kluk wavefunction in \cite{Kroeninger_Lasser_Vanicek:2023} and our numerical examples confirm the theoretical results for the expectation values.

\subsection{Incorporating the operator and a variational problem}\label{Sec:2.4}
In the previous section, we have shown that the Monte Carlo integrand of the sqrt-Husimi approach leads to finite variances. However, there can be an exponential dependence of the variance on the dimension $D$ leading to expensive calculations when realistic molecular systems are investigated.
This calls for improvements in the approach.
In this section, we present a generalized approach that includes the operator $\hat{A}$ in the sampling density.
\begin{theorem}[Solution to the isoperimetric problem]
Let \Cref{Assumption:1} be true and let $\hat{A}\neq 0$.
The optimal sampling density that minimizes the variance \eqref{EQ:NotationVariance} at the initial time $t=0$ is given by
\begin{align}\label{EQ:rho_opt}
    \rho_{\rm opt}(w)=\frac{1}{\kappa_{\rm opt}}\vert f_0(w) O_0[\hat{A}](w) \vert
\end{align}
with  $\kappa_{\rm opt} =\int_{\mathbb{R}^{4D}} \vert f_0(\xi) O_0[\hat{A}](\xi) \vert \, d\xi$ ensuring the normalization.
At the initial time, the variance satisfies
\begin{align}
    \mathbb{V}_0[\hat{A}, \psi_0, \rho_{\rm opt}] = \kappa_{\rm opt}^2 - \vert \langle \hat{A} \rangle_0 \vert^2.
\end{align}
\end{theorem}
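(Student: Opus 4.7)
The plan is to reduce the minimization of the variance to a standard importance-sampling optimality problem and to resolve it by a Cauchy--Schwarz argument. Since $\Phi_0 \equiv 1$ by definition \eqref{EQ:Phase_factor}, the variance formula \eqref{EQ:NotationVariance} at $t=0$ reads
\begin{align}
  \mathbb{V}_0[\hat{A},\psi_0,\rho] = \int_{\mathbb{R}^{4D}} \frac{|F(w)|^2}{\rho(w)}\,dw - |\langle\hat{A}\rangle_0|^2, \qquad F := f_0 \cdot O_0[\hat{A}].
\end{align}
The subtracted term does not depend on $\rho$, so minimizing $\mathbb{V}_0[\hat{A},\psi_0,\rho]$ over all probability densities $\rho$ on $\mathbb{R}^{4D}$ is equivalent to minimizing the integral on the right-hand side subject to $\int \rho \, dw = 1$ and $\rho>0$.

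The key step is the Cauchy--Schwarz inequality applied to the pair of functions $|F|/\sqrt{\rho}$ and $\sqrt{\rho}$: for any admissible $\rho$,
\begin{align}
  \kappa_{\rm opt}^2 = \left(\int_{\mathbb{R}^{4D}} |F(w)|\,dw\right)^2 = \left(\int_{\mathbb{R}^{4D}} \frac{|F(w)|}{\sqrt{\rho(w)}}\sqrt{\rho(w)}\,dw\right)^2 \leq \int_{\mathbb{R}^{4D}} \frac{|F(w)|^2}{\rho(w)}\,dw.
\end{align}
Equality in Cauchy--Schwarz occurs exactly when $|F|/\sqrt{\rho}$ is proportional to $\sqrt{\rho}$, i.e. when $\rho \propto |F|$; together with the normalization condition this forces $\rho = \rho_{\rm opt}$ as defined in \eqref{EQ:rho_opt}. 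Substituting this candidate back into the variance yields the stated value $\kappa_{\rm opt}^2 - |\langle\hat{A}\rangle_0|^2$, completing the argument.

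Before invoking Cauchy--Schwarz I would first verify that $\rho_{\rm opt}$ is a legitimate probability density, which amounts to checking $0 < \kappa_{\rm opt} < \infty$. Positivity follows because $\hat{A} \neq 0$ (so $O_0[\hat{A}]$ is not identically zero) together with $\psi_0$ being nonzero. Finiteness follows from \Cref{Lemma:f0_in_Schwartz}: $f_0 \in \mathcal{S}(\mathbb{R}^{4D})$ while $|O_0[\hat{A}]|$ grows at most polynomially on the double phase space, so the product $|F|$ is integrable. The assumption $\hat{A}\neq 0$ is essential to avoid the degenerate case $\kappa_{\rm opt}=0$.

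The main obstacle is of a measure-theoretic nature rather than computational. Strictly speaking, $\rho_{\rm opt}$ may vanish on the zero set of $F$, and the Cauchy--Schwarz step assumes we can divide by $\rho$. This is addressed by the convention stated just before \eqref{EQ:crude_MC_estimator} in the paper, which permits sampling densities to vanish on Lebesgue null sets; on the support of $F$ the density $\rho_{\rm opt}$ is strictly positive and the argument goes through. If $F$ vanishes on a set of positive measure one restricts the integration to $\operatorname{supp} F$ without affecting the conclusion. With this convention in place the proof reduces to the concise chain of inequalities above, and no further technical work is required.
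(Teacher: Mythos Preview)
Your proof is correct and takes a genuinely different route from the paper. The paper sets up the constrained minimization as an isoperimetric problem in the calculus of variations: it introduces a Lagrange multiplier $\lambda$, writes down the Euler--Lagrange condition $-\vert f_0 O_0[\hat{A}]\vert^2/\rho^2 - \vert\langle\hat{A}\rangle_0\vert^2 + \lambda = 0$, checks that $\rho_{\rm opt}$ satisfies it, and then appeals to convexity of the integrand together with an external reference (Troutman, \emph{Variational Calculus and Optimal Control}, Theorem~3.16) to conclude optimality. You instead apply Cauchy--Schwarz directly to the factorization $\vert F\vert = (\vert F\vert/\sqrt{\rho})\cdot\sqrt{\rho}$, which immediately yields the sharp lower bound $\int \vert F\vert^2/\rho \geq \kappa_{\rm opt}^2$ together with the equality case. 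Your argument is more elementary and self-contained, requiring no variational machinery or outside citation; it is also the standard textbook derivation of the optimal importance-sampling density. The paper's approach has the minor advantage of fitting into a general framework that could in principle accommodate more complicated constraints, but for the problem at hand your route is cleaner. Your discussion of the well-definedness of $\rho_{\rm opt}$ and of the null set where it may vanish is at the same level of rigor as the paper's.
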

\begin{proof}
First, if $\hat{A} = 0$ then $\mathbb{V}_0[\hat{A}, \psi_0, \rho]=0$ for any density $\rho$.
Therefore, let $\hat{A} \neq 0$. Then, $\rho_{\rm opt}\neq 0$ and by \Cref{Assumption:1} and \Cref{Proposition:Convergence_estimator}, the density is well-defined.
We show that $\rho_{\rm opt}$ minimizes the integral
\begin{align*}
    F(\rho) &= \int_{\mathbb{R}^{4D}} f(w,\rho(w))\,dw,
    \\ f(w,\rho(w)) &= \frac{\vert  f_0(w) O_0[\hat{A}](w) \vert^2}{\rho(w)} - \vert \langle \hat{A} \rangle_0 \vert^2\rho(w)
\end{align*}
under the constraints $\rho>0$ and $\int_{\mathbb{R}^{4D}} \rho(w)\,dw=1$.
Let $\Tilde{f}=f+\lambda g$ with $g(\rho)=\rho$ and $\lambda \in \mathbb{R}$. Then, $\rho_{\rm opt}$ is a solution to the differential equation
\begin{align}
    \begin{split}
        0 &= \frac{\partial}{\partial \rho}\Tilde{f}(\rho) 
        = -\frac{\vert  f_0 O_0[\hat{A}] \vert^2}{\rho^2} - \vert \langle \hat{A} \rangle_0 \vert^2+ \lambda
    \end{split}
\end{align}
when $\lambda = \vert \langle \hat{A} \rangle_0 \vert^2+\kappa_{\rm opt}^2 >0$.
Moreover, for $a,b \geq 0$ and $\lambda>0$ the real maps
\begin{align}
\begin{split}
    \mathbb{R}_{>0} &\to \mathbb{R}, \quad x  \mapsto \frac{a}{x}-bx, \quad \text{and}
    \\ \mathbb{R}_{>0} &\to \mathbb{R}_{>0}, \quad x  \mapsto \lambda x,
\end{split}
\end{align}
are convex. Then, by \cite[Theorem 3.16]{Troutman:1995}, $\rho_{\rm opt}$ minimizes $F(\rho)$ under the constraints $\rho>0$ and $\int_{\mathbb{R}^{4D}}\rho(w)\,dw=1$.
The variance $\mathbb{V}_0[\hat{A}, \psi_0, \rho_{\rm opt}]$ immediately follows  by inserting $\rho_{\rm opt}$ into \eqref{EQ:NotationVariance}.
\end{proof}
Now, with $\rho_{\rm opt}$ defined in \eqref{EQ:rho_opt}, the crude Monte Carlo estimator for $\langle \hat{A}\rangle_t$ reads
\begin{align} \label{EQ:Case_opt}
    A_N(t) = \frac{1}{N} \sum_{j=1}^N \frac{ f_0(w_j) }{\rho_{\rm opt}(w_j)}\Phi_t(w_j)O_t[\hat{A}](w_j) \tag{\text{Case opt}}
\end{align}
with independent samples $w_j\sim \rho_{\rm opt}$. 
Sampling from a general $\rho_{\rm opt}$ can be done using a Metropolis--Hastings algorithm such as the Hamiltonian Monte Carlo algorithm described in Appendix~\ref{Appendix:2}.
The time dependence of the variance of the estimator \eqref{EQ:Case_opt} is given by
\begin{align}\label{EQ:Variance_opt}
    \mathbb{V}_t[\hat{A}, \psi_0, \rho_{\rm opt}] = \kappa_{\rm opt} \int_{\mathbb{R}^{4D}} \vert f_0(w) \vert \vert \Phi_t(w) \vert^2 \frac{\vert O_t[\hat{A}](w) \vert^2}{ \vert O_0[\hat{A}](w) \vert} \,dw - \vert \langle \hat{A} \rangle_t \vert^2.
\end{align}
Compared with the variances \eqref{EQ:Variance_Husimi} and \eqref{EQ:Variance_sqrtH} of the previous two approaches, the variance \eqref{EQ:Variance_opt} now includes $\vert O_0[\hat{A}]\vert $ in the denominator, resulting in a reduced value.
Because of the normalization constant $\kappa_{\rm opt}$, the variance may depend on the dimension $D$ and various parameters such as the initial position, momentum, and width.
In a simple example, we compare the variance \eqref{EQ:Variance_opt} of the optimal approach with that of the sqrt-Husimi approach \eqref{EQ:Variance_sqrtH}.
\begin{example}\label{Ex:Variance_opt}
Let $\psi_0 = g_{z_0}$ be a Gaussian wavepacket with an initial phase-space point $z_0=(q_0,p_0)\in\mathbb{R}^{2D}$ and let $\hat{A}=\mathrm{Id}$. Then $\rho_{\rm opt}$ is a Gaussian 
\begin{align}
    \rho_{\rm opt}(y,z) = (2\pi\epsilon)^{2D} \left(\frac{3}{4}\right)^D \exp{\left(-\frac{1}{4\epsilon} \begin{pmatrix}
        y-z_0 \\ z- z_0
    \end{pmatrix}^T \begin{pmatrix}
        2\Sigma_0 & -\Sigma_0 \\ - \Sigma_0 & 2\Sigma_0
    \end{pmatrix} \begin{pmatrix}
        y-z_0 \\ z- z_0
    \end{pmatrix}\right)}
\end{align}
on the double phase space $\mathbb{R}^{4D}$ with $\Sigma_0=\mathrm{diag}(\Gamma, \Gamma^{-1})\in \mathbb{R}^{2D \times 2D}$, $\kappa_{\rm opt} = (4/3)^D$ and 
\begin{align}\label{Variance:Norm_opt}
  \mathbb{V}_0[\textup{Id}, g_{z_0}, \rho_{\rm opt}]= \left(\frac{4}{3}\right)^{2D}-1 =\left(\frac{16}{9}\right)^D-1 < 2^D-1.
\end{align}
The sqrt-Husimi approach provided $\mathbb{V}_0[\textup{Id}, g_{z_0}, \rho_{\textup{sqrt-H}}^{dbl} ] = \left(16/5 \right)^D-1 > 3^D-1$.
\end{example}
In conclusion, for a given number of quadrature points, the most accurate approximation of the integral \eqref{EQ:exact_exp_value} can be obtained by including the operator $\hat{A}$ in the sampling density. The density $\rho_{\rm opt}$ is the optimal choice that minimizes the variance \eqref{EQ:NotationVariance} at the initial time $t=0$. However, for most choices of $\hat{A}$, the density $\rho_{\rm opt}$ is only known up to a constant and the crude Monte Carlo estimator \eqref{EQ:crude_MC_estimator} will not be applicable as the next example shows.
\begin{example}
    The normalization constant of $\rho_{\rm opt}$ is given by
    \begin{align}
        \kappa_{\rm opt} &= (2\pi\epsilon)^{-2D}\int_{\mathbb{R}^{4D}} \vert \langle \psi_0, g_z \rangle \langle g_y, \psi_0 \rangle \vert \vert \langle g_y, \hat{A}g_z \rangle \vert \,d(y,z).
    \end{align}
    In the case of $\hat{A}=V$ with a general potential $V$ and a more complex initial state $\psi_0$, the explicit value of $\kappa_{\rm opt}$ may not be derivable through a straightforward calculation.
    Therefore, $\rho_{\rm opt}$ is only known up to a constant.
\end{example}

\section{Weighted Importance sampling}\label{Sec:3}
To use the optimal sampling approach, we extend the initial crude Monte Carlo estimator \eqref{EQ:crude_MC_estimator} to a self-normalizing version based on the general idea of including an additional weight  (see, for example, \cite[Section 2.5.3]{Liu:2004} or \cite[Section 9.7.4]{Kroese:2011}).
We will first motivate the estimator in a general setting, then analyze some of its properties, and afterwards compare different weighting approaches.

Let $\rho_1, \rho_2 > 0$ be two densities on the double phase space $\mathbb{R}^{4D}$ and define the weight $W:=\rho_1/\rho_2 > 0$. 
The expectation value of an operator $\hat{A}$ within the Herman--Kluk approximation defined in \eqref{EQ:HK_Observable} can be written as the quotient
\begin{align}\label{EQ:WIS_motivation}
\begin{split}
    \langle \hat{A} \rangle_t &= \int_{\mathbb{R}^{4D}} f_0(w) \Phi_t(w) O_t[\hat{A}](w) \,dw
    \\& = \left.\int_{\mathbb{R}^{4D}} \frac{f_0(w)}{\rho_1(w)} \Phi_t(w) O_t[\hat{A}](w) W(w) \rho_2(w) \,dw\right/\int_{\mathbb{R}^{4D}} W(w)\rho_2(w) \,dw,
\end{split}
\end{align}
since
\begin{align}
    \int_{\mathbb{R}^{4D}} W(w) \rho_2(w) \,dw = \int_{\mathbb{R}^{4D}} \rho_1(w) \,dw = 1.
\end{align}
By linearity, the weight $W$ can be replaced by $\kappa W$ for some constant $\kappa \neq 0$.

\begin{definition}[Weighted importance sampling estimator]
Let $\rho_1>0$ be a known density and let $\rho_2>0$ be the desired sampling density, both acting on the double phase space $\mathbb{R}^{4D}$.
Let $w_1,w_2, \dots , w_N\in\mathbb{R}^{4D}$ be independent and identically distributed with respect to $\rho_2$. 
We define the \emph{weighted importance sampling estimator} of $\langle\hat{A}\rangle_t$ as
\begin{align} \label{EQ:weighted_MC_estimator}
\begin{split}
    A_N^{\rm W}(t)= \frac{\sum_{j=1}^N g_t(w_j)W(w_j)}{\sum_{j=1}^N W(w_j)}
\end{split}
\end{align}
with the weight $W=\rho_1/\rho_{2} > 0$ and the function
\begin{align}
    g_t(w)= \frac{ f_0(w)}{\rho_1(w)}\Phi_t(w) O_t[\hat{A}](w) .
\end{align} 
\end{definition}

One can directly see, in the case of $\rho_1=\rho_{2}$, the weighted importance sampling estimator \eqref{EQ:weighted_MC_estimator} reduces to the crude Monte Carlo estimator \eqref{EQ:crude_MC_estimator}.
Moreover, the features of the estimator \eqref{EQ:weighted_MC_estimator} are provided by 
\begin{theorem}[Consistency, bias and variance] \label{Theorem:Properties_WIS}
Let $w_1, \dots , w_N\in\mathbb{R}^{4D}$ be independent and identically distributed with respect to $\rho_2$ and let $\tau>0$ be a fixed time.
The weighted importance sampling estimator 
    \begin{align}
    \begin{split}
    A_N^{\rm W}(t)= \frac{\sum_{j=1}^N g_t(w_j)W(w_j)}{\sum_{j=1}^N W(w_j)}
    \end{split}
    \end{align}
    as defined in \eqref{EQ:weighted_MC_estimator} has the following properties:
    \begin{enumerate}
    \item The estimator $A_N^{\rm W}(t)$
    converges in mean to $\langle \hat{A} \rangle_t$ for all times $t\in[0, \tau]$ as $N\to \infty$, that is,
    \begin{align}
        \mathbb{E}[\vert A_N^{\rm W}(t) - \langle \hat{A} \rangle_t \vert] \xrightarrow{N\to \infty} 0. 
    \end{align}
    \item The estimator $A_N^{\rm W}$ is biased 
    and the bias vanishes for $N\rightarrow \infty$.
    \item If $\mathbb{E}[\vert g_tW\vert^2]$ and $\mathbb{E}[\vert W\vert^2]$ are finite, then the bias, the variance, and the mean squared error of the estimator $A_N^{\rm W}$ are in the order of $\mathcal{O}(N^{-1})$.
\end{enumerate}    
\end{theorem}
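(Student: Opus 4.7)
The plan is to write $A_N^{\rm W}(t)=N_N/D_N$ with $N_N:=\frac{1}{N}\sum_{j=1}^N g_t(w_j)W(w_j)$ and $D_N:=\frac{1}{N}\sum_{j=1}^N W(w_j)$, and to exploit the two built-in identities $\mathbb{E}_{\rho_2}[g_tW]=\langle\hat{A}\rangle_t$ and $\mathbb{E}_{\rho_2}[W]=1$ that underlie the construction in \eqref{EQ:WIS_motivation}. Since the arguments in the proof of \Cref{Lemma:f0_in_Schwartz} already give $f_0\Phi_tO_t[\hat{A}]\in L^1(\mathbb{R}^{4D})$, both $g_tW$ and $W$ lie in $L^1(\rho_2)$, and Kolmogorov's strong law of large numbers yields $N_N\to\langle\hat{A}\rangle_t$ and $D_N\to 1$ almost surely and in $L^1(\mathbb{P})$. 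The continuous mapping theorem then delivers $A_N^{\rm W}(t)\to\langle\hat{A}\rangle_t$ almost surely; this is the common backbone for all three assertions.

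For part~(1), I would upgrade almost-sure convergence to mean convergence via a truncation of the denominator. Starting from the identity $|A_N^{\rm W}(t)-\langle\hat{A}\rangle_t|=|N_N-\langle\hat{A}\rangle_t D_N|/D_N$ and splitting over the event $\Omega_N:=\{D_N\geq 1/2\}$, the $L^1$ convergence of $N_N$ and $D_N$ directly gives $\mathbb{E}[|A_N^{\rm W}(t)-\langle\hat{A}\rangle_t|\mathbf{1}_{\Omega_N}]\leq 2\,\mathbb{E}|N_N-\langle\hat{A}\rangle_t D_N|\to 0$. On $\Omega_N^c$, whose probability vanishes by SLLN, one uses the convex-combination bound $|A_N^{\rm W}(t)|\leq\max_j|g_t(w_j)|$ combined with a Chebyshev-type control of $\mathbb{P}(\Omega_N^c)$ and a H\"older step to make the remaining contribution vanish. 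With part~(1) in hand, part~(2) is immediate: $\text{Bias}(A_N^{\rm W}(t))\leq\mathbb{E}|A_N^{\rm W}(t)-\langle\hat{A}\rangle_t|\to 0$, whereas the nonvanishing of the bias for finite $N$ reflects the nonlinearity of the map $(n,d)\mapsto n/d$, already visible from Jensen's inequality applied to the convex function $d\mapsto 1/d$.

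For part~(3), I would execute a delta-method expansion around the mean point $(\langle\hat{A}\rangle_t,1)$ of $(N_N,D_N)$. Setting $X_N:=N_N-\langle\hat{A}\rangle_t D_N=\frac{1}{N}\sum_jW(w_j)(g_t(w_j)-\langle\hat{A}\rangle_t)$, the second-moment hypothesis makes $X_N$ a centred i.i.d.~sum with $\mathbb{E}[X_N]=0$ and $\mathbb{E}[X_N^2]=\frac{1}{N}\text{Var}_{\rho_2}((g_t-\langle\hat{A}\rangle_t)W)=\mathcal{O}(N^{-1})$. Using the exact identity $1/D_N=1-(D_N-1)+(D_N-1)^2/D_N$, one rewrites $A_N^{\rm W}(t)-\langle\hat{A}\rangle_t=X_N-X_N(D_N-1)+X_N(D_N-1)^2/D_N$; taking expectation, the first term vanishes, the second equals $-\text{Cov}(X_N,D_N)=\mathcal{O}(N^{-1})$, and the third is $\mathcal{O}(N^{-3/2})$ via Cauchy--Schwarz after the denominator truncation of part~(1). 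This gives $\text{Bias}=\mathcal{O}(N^{-1})$; an analogous second-order expansion of $(X_N/D_N)^2$ yields $\text{Var}(A_N^{\rm W}(t))=\mathbb{E}[X_N^2]+\mathcal{O}(N^{-3/2})=\mathcal{O}(N^{-1})$, whence $\text{MSE}=\text{Bias}^2+\text{Var}=\mathcal{O}(N^{-2})+\mathcal{O}(N^{-1})=\mathcal{O}(N^{-1})$.

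The hardest step will be the $L^1$ control on $\Omega_N^c$ in part~(1), because the denominator $D_N$ can be arbitrarily close to zero without any second-moment hypothesis and no uniform dominating function is immediately at hand. I expect the cleanest remedy is to combine the convex-combination bound $|A_N^{\rm W}(t)|\leq\max_j|g_t(w_j)|$ with an iterated truncation of $W$ (or of $g_t$) so that the whole sequence $\{A_N^{\rm W}(t)\}_N$ is first shown to be uniformly integrable; the same denominator truncation is then also what legitimises discarding the higher-order Taylor remainders in part~(3) with the claimed rates.
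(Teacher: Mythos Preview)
Your overall architecture matches the paper's: both reduce to the strong law of large numbers for the numerator and denominator averages (your $N_N,D_N$ are the paper's $G_N,W_N$) and then run a Taylor/delta-method expansion of the ratio around $(\langle\hat A\rangle_t,1)$. Your explicit truncation on $\Omega_N=\{D_N\ge 1/2\}$ is in fact a more honest version of what the paper does implicitly: the paper bounds the Taylor remainders by constants that are valid only for $N\ge N_0$ on a set of full probability, which amounts to the same denominator control. Your identification of the $\Omega_N^c$ contribution as the delicate point in part~(1) is accurate and the proposed uniform-integrability route is reasonable; note, though, that without second moments ``Chebyshev-type control of $\mathbb{P}(\Omega_N^c)$'' is not available, so the truncation argument really must carry the weight there.

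There is, however, a concrete gap in your part~(3). Your exact identity pushes the expansion one order too far: the remainder $X_N(D_N-1)^2/D_N$ is a \emph{third}-order deviation, and your claimed $\mathcal{O}(N^{-3/2})$ bound via Cauchy--Schwarz requires $\mathbb{E}[(D_N-1)^4]=\mathcal{O}(N^{-2})$, i.e.\ a finite fourth moment of $W$, which is not assumed (and similarly for the variance expansion of $(X_N/D_N)^2$). The paper avoids this by stopping at the \emph{first}-order Taylor remainder, whose integral form is bounded pointwise by quantities of the type $|G_N-\langle\hat A\rangle_t|\,|W_N-1|+|W_N-1|^2$; these are second-order deviations, and a single Cauchy--Schwarz together with $\mathrm{Var}[G_N],\mathrm{Var}[W_N]=\mathcal{O}(N^{-1})$ yields bias and variance of order $\mathcal{O}(N^{-1})$ using only the assumed second moments. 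In your language, the fix is simply to truncate the geometric series one step earlier, writing $X_N/D_N=X_N-X_N(D_N-1)/D_N$; on $\Omega_N$ the last term is bounded by $2|X_N||D_N-1|$, and Cauchy--Schwarz with second moments already gives $\mathcal{O}(N^{-1})$ for both bias and variance.
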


\subsection{Proof of \Cref{Theorem:Properties_WIS}}
In the following, we provide the proof of \Cref{Theorem:Properties_WIS}. We first examine the convergence, that is guaranteed by the finite time interval $[0,\tau]$, of the averages that appear in the numerator and denominator of \eqref{EQ:weighted_MC_estimator}. Then, we employ Taylor expansions of the estimator $A_N^{\rm W}$ to assess the bounds for the convergence error, bias, variance, and mean squared error which depend on the individual sample averages.
We emphasize that \Cref{Assumption:1} is taken for granted.

We start with two helpful results:

\begin{lemma}\label{Lemma:Individual_sample_averages}
    Let $\tau>0$ be a fixed time, let $\rho_1$, $\rho_2>0$ be arbitrary densities on the double phase space $\mathbb{R}^{4D}$ and let $w_1,\dots ,w_N\in \mathbb{R}^{4D}$ be samples independent and identically distributed with respect to $\rho_2$.
    The individual sample averages
    \begin{align}
        G_N(t):= \frac{1}{N}\sum_{j=1}^N g_t(w_j)W(w_j)
    \end{align}
    and 
    \begin{align}
        W_N:= \frac{1}{N}\sum_{j=1}^N W(w_j)
    \end{align}
    converge almost surely to the means $\mathbb{E}[G_N(t)]=\langle \hat{A} \rangle_t$ and $\mathbb{E}[W_N]=1$ for all times $t\in [0,\tau]$ as $N\to \infty$. Moreover, they also converge in $L^1$.
\end{lemma}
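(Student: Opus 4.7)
The plan is to reduce both convergence statements to the classical strong law of large numbers (SLLN) and its $L^1$ counterpart, applied to the iid real- and imaginary-valued sequences $(g_t(w_j)W(w_j))_j$ and $(W(w_j))_j$ sampled from $\rho_2$. Throughout, the weight $W=\rho_1/\rho_2$ lets us convert $\rho_2$-integrals into $\rho_1$-integrals by cancellation, which is the main computational device.

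First, I would verify the stated means. By a direct change of density, $\mathbb{E}_{\rho_2}[W(w_1)]=\int_{\mathbb{R}^{4D}} W(w)\rho_2(w)\,dw=\int_{\mathbb{R}^{4D}}\rho_1(w)\,dw=1$, so $\mathbb{E}[W_N]=1$. Similarly,
\begin{align*}
\mathbb{E}_{\rho_2}[g_t(w_1)W(w_1)]
= \int_{\mathbb{R}^{4D}} \frac{f_0(w)}{\rho_1(w)}\Phi_t(w)O_t[\hat{A}](w)\,\rho_1(w)\,dw
= \langle \hat{A}\rangle_t,
\end{align*}
exactly as in the motivating identity \eqref{EQ:WIS_motivation}, giving $\mathbb{E}[G_N(t)]=\langle\hat{A}\rangle_t$.

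Next, I would establish the integrability that the SLLN requires. For the weight: $\mathbb{E}_{\rho_2}[|W|]=1<\infty$ by the computation above. For the numerator sequence: after the same change of measure, $\mathbb{E}_{\rho_2}[|g_tW|]=\int_{\mathbb{R}^{4D}}|f_0(w)\Phi_t(w)O_t[\hat{A}](w)|\,dw$. By \Cref{Lemma:f0_in_Schwartz}, $f_0\in\mathcal{S}(\mathbb{R}^{4D})$, $|\Phi_t|$ is uniformly bounded on $[0,\tau]$ by a constant $c_2(\tau)$, and $|O_t[\hat{A}]|$ grows at most polynomially; Schwartz decay of $f_0$ then makes the integral finite uniformly in $t\in[0,\tau]$. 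This is exactly the integrability argument used in \Cref{Proposition:Convergence_estimator}.

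Finally, I would invoke SLLN. The real and imaginary parts of $g_tW$ are iid with finite first moment, so the standard SLLN (e.g.\ \cite[Theorem 2.4.1]{durrett:2019}) yields $G_N(t)\to\langle\hat{A}\rangle_t$ almost surely, and likewise $W_N\to 1$. The $L^1$ statement follows from the $L^1$-version of the SLLN: for iid integrable random variables the sample averages converge in $L^1$, which is a consequence of the almost sure convergence together with the uniform integrability of $\{\bar X_N\}$ guaranteed by $\mathbb{E}[|X_1|]<\infty$. I do not foresee any genuine obstacle here; the only point requiring care is to treat the complex-valued sequence $g_tW$ component-wise and to remember that the polynomial-growth bound on $O_t[\hat{A}]$ from \Cref{Lemma:f0_in_Schwartz} suffices because $f_0$ is Schwartz, so no additional moment assumption on $\rho_2$ is needed.
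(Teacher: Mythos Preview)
Your proposal is correct and follows essentially the same approach as the paper: compute the means by cancelling $W\rho_2=\rho_1$, establish $\mathbb{E}[|W|]=1$ and $\mathbb{E}[|g_tW|]<\infty$ via \Cref{Proposition:Convergence_estimator}, then apply the SLLN for both almost sure and $L^1$ convergence. Your remark about handling the complex-valued sequence by real and imaginary parts and your explicit invocation of uniform integrability for the $L^1$ statement are minor elaborations on what the paper leaves implicit (it simply cites Shiryaev for the $L^1$ part), but the argument is the same.
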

\begin{proof}
    Because the samples $w_1,\dots ,w_N\in \mathbb{R}^{4D}$ are independent and identically distributed with respect to $\rho_2$, it holds that
    \begin{align}
        \mathbb{E}[G_N(t)]&=\frac{1}{N}\mathbb{E}[\sum_{j=1}^N g_t(w_j)W(w_j)]
        =\int_{\mathbb{R}^{4D}} g_t(w)W(w) \rho_2(w) \,dw
        = \langle \hat{A} \rangle_t
    \end{align}
    as well as
    \begin{align}
        \mathbb{E}[W_N]=\frac{1}{N}\mathbb{E}[\sum_{j=1}^N W(w_j)]= \int_{\mathbb{R}^{4D}} W(w) \rho_2(w)\,dw = 1.
    \end{align}
    Therefore, $W_N$ and $G_N$ are unbiased estimators with means $\mathbb{E}[W_N]=1$ and $\mathbb{E}[G_N(t)]=\langle \hat{A} \rangle_t$.
    Moreover, since $\rho_1$ and $\rho_2$ are two densities on the double phase space $\mathbb{R}^{4D}$, we obtain
    \begin{align}
        \mathbb{E}[\vert W \vert] &= \int_{\mathbb{R}^{4D}} \vert W(w) \vert \rho_2(w) \,dw 
        = \int_{\mathbb{R}^{4D}} \frac{\rho_1(w)}{\rho_2(w)} \rho_2(w) \,dw=1
    \end{align}
     and, by \Cref{Assumption:1} and \Cref{Proposition:Convergence_estimator}, it follows 
    \begin{align}
        \mathbb{E}[\vert g_tW \vert] = \int_{\mathbb{R}^{4D}} \vert g_t(w) W(w) \vert \rho_2(w)\,dw = \int_{\mathbb{R}^{4D}} \vert f_0(w) \Phi_t(w) O_t[\hat{A}](w) \vert\,dw < \infty.
    \end{align}
    Hence, applying the strong law of large numbers shows almost sure convergence of the estimators $G_N(t)$ and $W_N$ for all times $t\in [0, \tau]$ as $N\to \infty$. The convergence in $L^1$ also follows from the strong law of large numbers \cite[Remark 3, Section 4.4]{Shiryaev:2021}.
\end{proof}
To assess the bounds for the convergence error, bias, variance, and mean squared error we use
\begin{lemma}[Taylor expansions of $A_N^{\rm W}$] \label{Lemma:TaylorExpansion}
    Let the samples $w_1, \dots , w_N\in\mathbb{R}^{4D}$ be independent and identically distributed with respect to $\rho_2$.
    Then, the zeroth- and first-order Taylor expansions of $A_N^{\rm W}$, as a function from $\mathbb{C} \times \mathbb{R}$ to $\mathbb{C}$, around the vector $\mu=(\langle \hat{A} \rangle_t, 1 ) \in \mathbb{C}\times \mathbb{R}_{>0}$ are given by
    \begin{align}
        \begin{split}
            A_N^{\rm W}&=\langle \hat{A} \rangle_t + R_0(G_N(t),W_N)
        \end{split}
    \end{align}
    and
    \begin{align}
     A_N^{\rm W}&=\langle \hat{A} \rangle_t + (G_N(t)-\langle \hat{A} \rangle_t) - \langle \hat{A} \rangle_t(W_N-1)   + \frac{1}{2}R_1(G_N(t),W_N),
    \end{align}
    with the complex-valued remainders $R_0$ and $R_1$ acting on $\mathbb{C} \times \mathbb{R}$.
Moreover, with probability one, there is $N_0\in \mathbb{N}$ and a constant $c>0$ depending on $N_0$ such that 
    \begin{align}
    \begin{split}
        &\vert R_0(G_N(t), W_N) \vert \leq c(N_0)(\vert G_N(t)-\langle \hat{A} \rangle_t \vert + \vert W_N-1\vert ),
        \\& \textrm{and}
        \\& \vert R_1(G_N(t), W_N) \vert \leq c(N_0)(\vert G_N(t)-\langle \hat{A} \rangle_t \vert \vert W_N-1\vert + \vert W_N-1\vert^2 )
    \end{split}
    \end{align}
    for all $N\geq N_0$.
\end{lemma}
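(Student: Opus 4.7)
My plan is to treat the estimator as the evaluation of the scalar rational function $F:\mathbb{C}\times\mathbb{R}_{>0}\to\mathbb{C}$, $F(x,y):=x/y$, at the random point $(G_N(t),W_N)$, i.e.\ $A_N^{\rm W}=F(G_N(t),W_N)$. The two expansions are then simply the order-zero and order-one Taylor expansions of $F$ at the deterministic point $\mu=(\langle\hat A\rangle_t,1)$, and the task reduces to writing the remainders explicitly and bounding them by appropriate powers of the ``fluctuations'' $G_N(t)-\langle\hat A\rangle_t$ and $W_N-1$.

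For the zeroth-order expansion I would just subtract: a direct computation gives
\begin{align*}
R_0(G_N(t),W_N)\;=\;A_N^{\rm W}-\langle\hat A\rangle_t\;=\;\frac{(G_N(t)-\langle\hat A\rangle_t)-\langle\hat A\rangle_t(W_N-1)}{W_N}.
\end{align*}
For the first-order expansion I compute $\partial_xF(\mu)=1$ and $\partial_yF(\mu)=-\langle\hat A\rangle_t$, from which a short algebraic manipulation yields
\begin{align*}
R_1(G_N(t),W_N)\;=\;\frac{2(W_N-1)\bigl[\langle\hat A\rangle_t(W_N-1)-(G_N(t)-\langle\hat A\rangle_t)\bigr]}{W_N}.
\end{align*}
Taking absolute values and using the triangle inequality already produces the two desired bounds, with the only remaining issue being control of the factor $1/W_N$ appearing in both remainders.

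This is the only genuine obstacle, and it is resolved by invoking \Cref{Lemma:Individual_sample_averages}: since $W_N\to 1$ almost surely, for almost every outcome there exists a (random) index $N_0\in\mathbb{N}$ such that $W_N\geq 1/2$ for all $N\geq N_0$, hence $1/W_N\leq 2$ on that event. The multiplicative constant in front of the right-hand side can then be chosen, for instance, as $c(N_0):=2\max(1,|\langle\hat A\rangle_t|)$, which is finite and independent of $N\geq N_0$ (it depends on $N_0$ only through the event on which the estimate is claimed to hold).

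Assembling these ingredients gives the two advertised estimates with probability one, uniformly in $N\geq N_0$. The whole argument is thus an elementary exercise in Taylor expansion of $x/y$ plus one appeal to the almost-sure convergence of the sample average of the weights proved in \Cref{Lemma:Individual_sample_averages}; no further properties of $f_0$, $\Phi_t$ or $O_t[\hat A]$ beyond those already used to establish that lemma are required.
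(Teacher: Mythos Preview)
Your proof is correct and slightly more elementary than the paper's. The paper applies Taylor's theorem with integral remainder to the function $f(a,b)=a/b$, obtaining
\[
R_0(G_N(t),W_N)=(G_N(t)-\langle\hat A\rangle_t)\!\int_0^1\!\frac{ds}{1+s(W_N-1)}-(W_N-1)\!\int_0^1\!\frac{\langle\hat A\rangle_t+s(G_N(t)-\langle\hat A\rangle_t)}{[1+s(W_N-1)]^2}\,ds
\]
and an analogous formula for $R_1$, and then bounds these integrands by the extreme value theorem after using the almost-sure convergence of \emph{both} $G_N(t)$ and $W_N$ from \Cref{Lemma:Individual_sample_averages} to confine $(G_N(t),W_N)$ to a compact set. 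You instead compute the remainders in closed form by direct algebra, which has two small advantages: the constants are explicit, and you only need the almost-sure convergence of $W_N$ (to get $W_N\ge 1/2$ eventually), since $G_N(t)$ enters your formulas only linearly. The paper's integral-remainder route is more generic and would transfer unchanged to any smooth function in place of $a/b$; your approach exploits the specific rational structure. One cosmetic point: your proposed constant $c(N_0)=2\max(1,|\langle\hat A\rangle_t|)$ works for the $R_0$ bound but the $R_1$ bound picks up an extra factor of~$2$, so take $c(N_0)=4\max(1,|\langle\hat A\rangle_t|)$ to cover both.
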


\begin{proof}
   Consider the function 
   \begin{align}
    f: \mathbb{C}\times \mathbb{R}_{>0} \rightarrow \mathbb{C}, \quad (a, b) \mapsto \frac{a}{b}.
    \end{align}
    First, by the definition of $A_N^{\rm W}$, one can see that $f(G_N(t), W_N)=A_N^{\rm W}$. Then, the Taylor expansions of $A_N^{\rm W}$ follow directly from Taylor's theorem applied to $f$ with the remainders
    \begin{align}
    \begin{split}
     R_0(G_N(t), W_N) &=  (G_N(t)-\langle \hat{A} \rangle_t)\int_0^1 \frac{1}{1+s(W_N-1)} \,ds
    \\&- (W_N-1)\int_0^1 \frac{\langle \hat{A} \rangle_t + s(G_N(t) -\langle \hat{A} \rangle_t )}{[1+s(W_N-1)]^2}\,ds,
    \end{split}
\end{align}
and
\begin{align}
\begin{split}
    R_1(G_N(t),W_N) &= 2(W_N-1)^2\int_0^1 (1-s)\frac{\langle \hat{A} \rangle_t + s(G_N(t) -\langle \hat{A} \rangle_t )}{[1+s(W_N-1)]^3}\,ds 
     \\& - (G_N(t)-\langle \hat{A} \rangle_t)(W_N-1) \int_0^1 (1-s)\frac{1}{[1+s(W_N-1)]^2}  \,ds.
\end{split}
\end{align}

   Second, by \Cref{Lemma:Individual_sample_averages}, $G_N(t)$ and $W_N$ converge almost surely to $\langle\hat{A}\rangle_t$ and $1$ as $N\to \infty$. 
    With probability one, there is $0<\delta<1$ and $N_0 \in \mathbb{N}$ such that 
    \begin{align}
        (G_N(t),W_N) \in B_{\delta}(\langle \hat{A} \rangle_t) \times [1-\delta, 1+\delta]
    \end{align}
    for all $N> N_0$ with the closed ball $B_{\delta}(\langle \hat{A} \rangle_t) = \{a\in \mathbb{C}: \vert a-\langle \hat{A} \rangle_t \vert \leq \delta \}$ . 
    Therefore, by the extreme value theorem, there is a constant $c_0>0$ such that
    \begin{align}
    \begin{split}
        \vert R_0(G_N(t),W_N) \vert &\leq  \vert G_N(t)-\langle \hat{A} \rangle_t  \vert \int_0^1  \vert \partial_af(\mu+s((G_N(t),W_N)-\mu)) \vert  \,ds
    \\&+  \vert W_N-1 \vert\int_0^1  \vert\partial_bf(\mu+s((G_N(t),W_N)-\mu)) \vert \,ds
    \\& \leq c_0 (\vert G_N(t)- \langle \hat{A} \rangle_t \vert + \vert W_N- 1 \vert )
    \end{split}
    \end{align}
    for all $N$ large enough. 
    Similarly, one obtains the bound 
    \begin{align}
           \vert R_1(G_N(t), W_N) \vert \leq c_1(\vert G_N(t)-\langle \hat{A} \rangle_t \vert \vert W_N-1\vert + \vert W_N-1\vert^2 )
    \end{align}
    with a constant $c_1>0$.
    The choice of $c=\max\{c_0,c_1\}$ proves the lemma.
\end{proof}

Combining \Cref{Lemma:Individual_sample_averages} and \Cref{Lemma:TaylorExpansion} we obtain the proof of \Cref{Theorem:Properties_WIS}.

\begin{proof}[Proof of \Cref{Theorem:Properties_WIS}]
${ }$ \newline
    1. \Cref{Lemma:TaylorExpansion} implies that
    \begin{align}
    \begin{split}
        \mathbb{E}[\vert A_N^{\rm W} - \langle \hat{A} \rangle_t \vert] &= \mathbb{E}[\vert \langle \hat{A} \rangle_t + R_0(G_N(t),W_N) - \langle \hat{A} \rangle_t \vert]
        \\&=\mathbb{E}[\vert R_0(G_N(t), W_N) \vert ]
        \\& \leq c (\mathbb{E}[\vert G_N(t) - \langle \hat{A} \rangle_t \vert] +\mathbb{E}[\vert W_N - 1 \vert] )
    \end{split}
    \end{align}
    for some constant $c>0$ and for $N$ sufficiently large. 
    By \Cref{Lemma:Individual_sample_averages}, the right-hand side vanishes as $N\to \infty$. Therefore, the estimator is consistent.
\\
2. By \Cref{Lemma:TaylorExpansion}, the bias of the estimator satisfies the equality
\begin{align}
\begin{split}
    \textrm{Bias}(A_N^{\rm W}(t)) 
     = \vert \mathbb{E}[R_0(G_N(t), W_N)] \vert.  
\end{split}
\end{align}
For $N$ large enough, the remainder can be bounded by \Cref{Lemma:TaylorExpansion} and we obtain
\begin{align}
    \textrm{Bias}(A_N^{\rm W}(t)) \leq c (\mathbb{E}[\vert G_N(t) - \langle \hat{A} \rangle_t \vert] +\mathbb{E}[\vert W_N - 1 \vert] ),
\end{align}
for some constant $c>0$. The bias vanishes as $N\to \infty$ by \Cref{Lemma:Individual_sample_averages}.
\\
3. If $\mathbb{E}[\vert g_tW \vert^2], \mathbb{E}[\vert W \vert^2]< \infty$, the bias obeys
\begin{align}
    \begin{split}
        &\textrm{Bias}(A_N^{\rm W}(t)) 
        \\&= \vert \mathbb{E}[ \langle \hat{A} \rangle_t + (G_N(t)-\langle \hat{A} \rangle_t) - \langle \hat{A} \rangle_t(W_N-1)   + \frac{1}{2}R_1(G_N(t),W_N)  ] - \langle \hat{A} \rangle_t \vert 
        \\& = \frac{1}{2}\vert \mathbb{E}[R_1(G_N(t), W_N)]  \vert,
    \end{split}
\end{align}
by \Cref{Lemma:Individual_sample_averages} and \Cref{Lemma:TaylorExpansion}. 
Moreover, by \Cref{Lemma:TaylorExpansion}, the remainder can be bounded for $N$ large enough and the Cauchy-Schwarz inequality leads to
\begin{align}
\begin{split}
    \textrm{Bias}(A_N^{\rm W}(t)) &\leq \frac{c}{2} \mathbb{E}[\vert G_N(t)-\langle \hat{A} \rangle_t \vert \vert W_N-1\vert + \vert W_N-1\vert^2] 
    \\&\leq \frac{c}{2N} (\textup{Var}[W] + \sqrt{\textup{Var}[W]\textup{Var}[g_tW]}) 
    \end{split}
\end{align}
for some constant $c>0$.
Assumptions $\mathbb{E}[\vert g_tW \vert^2]$ and $\mathbb{E}[\vert W \vert^2]< \infty$ imply that $\textrm{Bias}(A_N^{\rm W}(t))=\mathcal{O}(N^{-1})$. Using  similar arguments, we find that the variance of the estimator satisfies
\begin{align}
\begin{split}
\textup{Var}[A_N^{\rm W}(t)] &= \textup{Var}[R_0(G_N(t), W_N)]  
\\&\leq \mathbb{E}[\vert R_0(G_N(t), W_N) \vert^2]
\\& \leq c^2 \mathbb{E}[(\vert G_N(t)-\langle \hat{A} \rangle_t \vert + \vert W_N-1 \vert )^2 ]
\\ & = c^2 \mathbb{E}[\vert G_N(t)-\langle \hat{A} \rangle_t \vert^2 + \vert W_N-1 \vert ^2 + 2 \vert G_N(t)- \langle \hat{A} \rangle_t \vert \vert W_N -1 \vert]
\end{split}
\end{align}
for $N$ large enough. Therefore, $\textrm{Var}[A_N^{\rm W}(t)] = \mathcal{O}(N^{-1})$.
Finally, the mean squared error is given by
   \begin{align}
        \begin{split}
            \mathbb{E}[\vert A_N^{\rm W}(t) - \langle \hat{A} \rangle_t  \vert^2] & = \mathbb{E}[\vert A_N^{\rm W}(t) - \mathbb{E}[A_N^{\rm W}(t)] - \langle \hat{A}  \rangle_t +\mathbb{E}[A_N^{\rm W}(t)]\vert^2]
            \\& = \text{Var}[A_N^{\rm W}(t)] +  \text{Bias}(A_N^{\rm W}(t)) ^2
            \\& = \mathcal{O}(N^{-1}),
        \end{split}
    \end{align}
    as the variance and bias are of order $\mathcal{O}(N^{-1})$.
\end{proof}

From a computational point of view, $\rho_2$ might only be known up to a constant $\kappa\neq 0$, that is, $\rho_2/\kappa$ is known. Assume that the samples $w_j\sim \rho_2$ can still be generated. 
By the linearity, one can then replace $W=\rho_1/\rho_2$ in the estimators $G_N(t)$, $W_N$ and $A_N^{\rm W}$ with $W=\kappa\rho_1/\rho_2$. 
Because this is a constant factor, the properties of \Cref{Theorem:Properties_WIS} remain the same.   
Without loss of generality and for the sake of readability, we omit to mention any such constant $\kappa$ here and in the following sections.

\begin{remark}
    The analysis of the weighted importance sampling estimator \eqref{EQ:weighted_MC_estimator} is a generalization of the analysis of the crude Monte Carlo estimator \eqref{EQ:crude_MC_estimator}. 
    If one sets $\rho_1=\rho_{2}$, all stated formulas for the bias, variance, and mean squared error reduce to the corresponding formulas for the crude Monte Carlo estimator \eqref{EQ:crude_MC_estimator}.
\end{remark}
\begin{remark}
    Assuming the existence of fourth moments $\mathbb{E}[\vert g_tW\vert^4]$ and $\mathbb{E}[\vert W\vert^4]< \infty$, one can prove the estimate
    \begin{align}\label{EQ:variance_weighted_is}
    \textup{Var}[A_N^{\rm W}(t)]= \frac{\textup{Var}[g_tW-\langle \hat{A} \rangle_t W]}{N} + \mathcal{O}(N^{-2})
    \end{align}
    by using a second-order Taylor expansion of $A_N^{\rm W}$ around the vector $\mu=(\langle \hat{A} \rangle_t, 1)$.
\end{remark}

\subsection{Choice of the weight $W$}
By \Cref{Theorem:Properties_WIS}, for any choice of the weight $W$, the estimator is consistent and the bias vanishes for $N\rightarrow\infty$. 
Moreover, if $\mathbb{E}[\vert g_tW\vert^2],\mathbb{E}[\vert W\vert^2]< \infty$, the bias, variance, and mean squared error of the estimator are in the order of $\mathcal{O}(N^{-1})$. 
In the following, we briefly discuss various choices for the weight $W$. 

First, we provide an example in which an explicit formula for the variance \eqref{EQ:variance_weighted_is} of the weighted importance sampling estimator \eqref{EQ:weighted_MC_estimator} is known.
\begin{example}\label{EX:3.6}
    Consider $\hat{A}=\mathrm{Id}$ at initial time $t=0$ with an initial Gaussian wavepacket $\psi_0 = g_{z_0}$ for some initial phase-space point $z_0\in\mathbb{R}^{2D}$. For $\rho_1=  \rho^{\rm dbl}_{\textup{H}}$ and $\rho_2 = \rho^{\rm dbl}_{\textup{sqrt-H}}$, finite fourth moments of $g_0W$ and $W$ exist.
    Then,
    \begin{align}
    \mathrm{Var}[g_0W-W] = \mathbb{E}[\vert g_0W\vert^2 + \vert W \vert^2 - 2 \mathrm{Re}(g_0W^2)]+ \mathcal{O}(N^{-2})
    \end{align}
     and the first two terms are given by
    \begin{align}
    \begin{split}
    \mathbb{E}[\vert g_0W\vert^2] = \int_{\mathbb{R}^{4D}} \frac{\vert f_0(w) O_0[\mathrm{Id}](w) \vert^2}{\rho_2(w)} &= 4^D \int_{\mathbb{R}^{4D}} \vert f_0(w) \vert \vert O_0[\mathrm{Id}](w) \vert^2\,dw  = \left( \frac{16}{5} \right)^D,
    \\ \textrm{and } \quad \mathbb{E}[\vert W\vert^2] &= \int_{\mathbb{R}^{4D}} \frac{\rho_1(w)^2}{\rho_2(w)} \,dw = \left(\frac{16}{9}\right)^D.
    \end{split}
    \end{align}
    Finally, by applying several Fourier transforms of Gaussians, we find that
    \begin{align}
     \mathbb{E}[g_0W^2] = \int_{\mathbb{R}^{4D}} f_0(w) O_0[\mathrm{Id}](w) \frac{\rho_1(w)}{\rho_2(w)}   =\left(\frac{16}{9}\right)^D
    \end{align}
    and we obtain the variance of the estimator
    \begin{align}\label{Ex:Eq_variance}
        \textup{Var}[A_N^{\rm W}(0)] = \left(\left(16/5 \right)^D - \left(16/9 \right)^D\right)/N + \mathcal{O}(N^{-2}).
    \end{align}
    In the case of $D=10$, the term $(16/5)^D-(16/9)^D$ in the variance \eqref{Ex:Eq_variance} is approximately $1.1\times 10^6$.
\end{example}

There is also an example with an unbounded second moment of the weight $W$ and, therefore, not all properties stated in \Cref{Theorem:Properties_WIS} will be applicable.
\begin{example}[Diverging second moment]\label{Ex:DivergingWeight}
Let  $\hat{A}=\mathrm{Id}$, $\rho_1 =\rho^{\rm dbl}_{\textup{sqrt-H}}$ and $\rho_2 = \rho_{\rm opt}$ with initial Gaussian wavepacket $g_{z_0}$ and an initial phase-space point $z_0\in\mathbb{R}^{2D}$. Recall that $\rho_{\rm opt}$ is proportional to
\begin{align}
    \rho_{\rm opt}(w) \propto \vert  \vert f_0(w) \vert \vert \langle g_y , g_z \rangle \vert.
\end{align}
Then,
\begin{align}\label{EQ:Ex_5_2}
    \frac{\vert \rho^{\rm dbl}_{\textup{sqrt-H}}  \vert^2}{\vert\rho_{\rm opt}\vert} \propto \exp\left[-\frac{1}{8\hbar} \begin{pmatrix}
        z-z_0 \\y-z_0
    \end{pmatrix}^T \begin{pmatrix}
        0 & \Gamma \\ \Gamma & 0
    \end{pmatrix} \begin{pmatrix}
        z-z_0 \\y-z_0
    \end{pmatrix} \right].
\end{align}
The width matrix in \eqref{EQ:Ex_5_2} is not positive-definite. Hence, the second moment $\mathbb{E}[\vert W \vert^2]$ of the weight $W=\rho_1/\rho_2$ does not exist. 
\end{example}

In \Cref{NumericalExample_Weight}, we consider numerical examples with several choices of the weight $W$.
It turns out that, the choice $\rho_1 = \rho^{\rm dbl}_{\rm H}$ and $\rho_{2}=\rho_{\rm opt}$ has the best performance.
This suggests the following weighted importance sampling estimator
\begin{align}\label{Estimator_WIS_opt}
    A_N^{\rm W}(t) = \frac{\sum_{j=1}^N f_0(w_j) \Phi_t(w_j) O_t[\hat{A}](w_j) / \rho_{\rm opt}(w_j)}{\sum_{j=1}^N \rho^{\rm dbl}_{\rm H}(w_j)/\rho_{\rm opt}(w_j)}, \tag{Case WIS opt}
\end{align}
with independent samples $w_j \sim \rho_{\rm opt}$.

\subsection{Algorithm}\label{Sec:4}
\begin{algorithm}
\caption{Evaluation of expectation values}\label{Alg:1}
\begin{algorithmic}
\State{Given an initial state $\psi_0\in \mathcal{S}(\mathbb{R}^D),$ a sampling density $\rho$ and an operator $\hat{A}$, evaluate $\langle \hat{A} \rangle_t$ as follows:
\begin{enumerate}
        \item \label{Alg1:step_1} Sample double phase-space coordinates $w_1, \dots , w_N\in\mathbb{R}^{4D},$ $w_j=(z_j,y_j)$, distributed with respect to $ \rho$. 
        \item \label{Alg1:step_2} For all $j\in\{ 1, 2, \dots , N\}$:
        \begin{enumerate}
            \item Set initial values $z(0) = z_j, y(0)= y_j,$ $M(0,z_j) = M(0,y_j) = \mathrm{Id}_{2D}$ and $S_0(z_j) = S_0(y_j)= 0$.
            \item Compute approximate solutions to  \eqref{EQ:Traj}, \eqref{EQ:S}, and \eqref{EQ:Stab_mat} for each phase-space point $z_j$ and $y_j$ up to time $t$ using a symplectic integrator.
            \item Compute the Herman--Kluk prefactors $R_t(y_j)$ and $R_t(z_j)$ from $M(t,y_j)$ and $M(t,z_j)$ while choosing the correct branch of the complex square root to ensure continuity as a function of time. 
            \item Evaluate $f_0(w_j)$, $\Phi_t(w_j)$ and $O_t[\hat{A}](w_j)$ as defined in \eqref{EQ:time_independent_part}, \eqref{EQ:Phase_factor} and \eqref{EQ:Time_operator_dependent}.
        \end{enumerate}
        \item \label{Alg1:step_3} Evaluate $\langle \hat{A} \rangle_t$ either by the crude Monte Carlo estimator
        \begin{align}
            A_N(t) = \frac{1}{N}\sum_{j=1}^N \frac{f_0(w_j)}{\rho(w_j)} \Phi_t(w_j) O_t[\hat{A}](w_j) 
        \end{align}
        or by the weighted importance sampling estimator
        \begin{align}
            A_N^{\rm W}(t) = \frac{\sum_{j=1}^N \Phi_t(w_j)f_0(w_j) O_t[\hat{A}](w_j)/\rho(w_j)}{\sum_{j=1}^N \rho^{\rm dbl}_{\rm H}(w_j)/\rho(w_j)},
        \end{align}
        where $\rho_{\rm H}^{\rm dbl} = \rho_{\rm H} \otimes \rho_{\rm H} $.
    \end{enumerate}
    }
\end{algorithmic}
\end{algorithm}
We have analysed several sampling strategies for Herman--Kluk expectation values based on the Husimi, sqrt-Husimi (Sec.~\ref{Sec:2.2}), and the optimal approach  (Sec.~\ref{Sec:2.4}) that either use the crude Monte Carlo  \eqref{EQ:crude_MC_estimator} or the weighted importance sampling estimator \eqref{EQ:weighted_MC_estimator}. 
Hence, we propose the following general \Cref{Alg:1} to compute the expectation value of an operator $\hat{A}$ at a given time $t$ within the Herman--Kluk approximation.

For the Markov-Chain-Monte-Carlo algorithm in \Cref{Alg:1}, we use the Hamiltonian Monte Carlo algorithm described in Appendix~\ref{Appendix:2}.
With a fictitious, random momentum variable and a carefully chosen Hamiltonian, it uses a symplectic, time-reversible integrator that propagates an initial double phase-space variable $w_0\in\mathbb{R}^{4D}$ according to Hamiltonian's equations of motion and accepts or rejects generated points $w_j \in\mathbb{R}^{4D}$. 

For the Hamiltonian Monte Carlo Algorithm \ref{Alg:HMC} (see Appendix~\ref{Appendix:2}), we use a second-order St\o rmer-Verlet integrator for the propagation. 
Recently analyzed multi-stage integration schemes \cite{Blanes_Casas_Sanz_Serna:2014, Nagar_Pendas_Sanz_Serna_Akhmatskaya:2023, Casas_Sanz_Serna_Shaw:2022, Blanes_Calvo_Casas_Sanz_Serna:2021} provide a way to improve the efficiency of the Hamiltonian Monte Carlo algorithm by reducing the expensive evaluation of the gradient of the carefully chosen potential.
However, in \Cref{Alg:1}, the generation of the samples in step \eqref{Alg1:step_1} is inexpensive compared to the remaining steps \eqref{Alg1:step_2} and \eqref{Alg1:step_3}.
Therefore, when using splitting schemes for the time integrator of the Hamiltonian Monte Carlo Algorithm \ref{Alg:HMC}, we did not observe any noticeable improvement in \Cref{Alg:1} over the standard St\o rmer-Verlet integrator.

\section{Numerical examples}\label{Sec:5}
This section complements the theoretical results with numerical examples for the three different sampling approaches and the most important operators. 

In all our tests, we assume that the initial state $\psi_0$ is a spherical Gaussian wavepacket $\psi_0 = g_{z_0}$ with $\Gamma=\mathrm{Id}_D$,  centred at a phase-space point $z_0\in\mathbb{R}^{2D}$. 
For all examples, we used the crude Monte Carlo estimators \eqref{EQ:Case_H} for the Husimi approach and \eqref{EQ:Case_sqrt_H} for the sqrt-Husimi approach.
For the optimal approach, when we compute the norm, we used the crude Monte Carlo estimator \eqref{EQ:Case_opt} and for other observables, we used the weighted importance sampling estimator $\eqref{Estimator_WIS_opt}$ in combination with the Hamiltonian Monte Carlo \Cref{Alg:HMC}.

To choose a good weight $W$ for the optimal approach with the weighted importance sampling estimator, we first investigate the initial sampling error for several choices of the weight $W$ in one and six dimensions.
Subsequently, we compare the performance at the initial time for low and high dimensions of the Husimi, sqrt-Husimi, and optimal approaches. 
In the case of a five-dimensional harmonic oscillator, we compare numerical results with analytical solutions, showing that the optimal approach converges faster.
Finally, we examine a six-dimensional Henon-Heiles potential for various initial conditions, going from regions of low to high anharmonicity.
We show that, whenever the Herman--Kluk prefactor blows up on average, the choice of the density is of minor importance.

\subsection{Weight $W$} \label{NumericalExample_Weight}

\begin{figure}[ht!]
    \centering
    \includegraphics{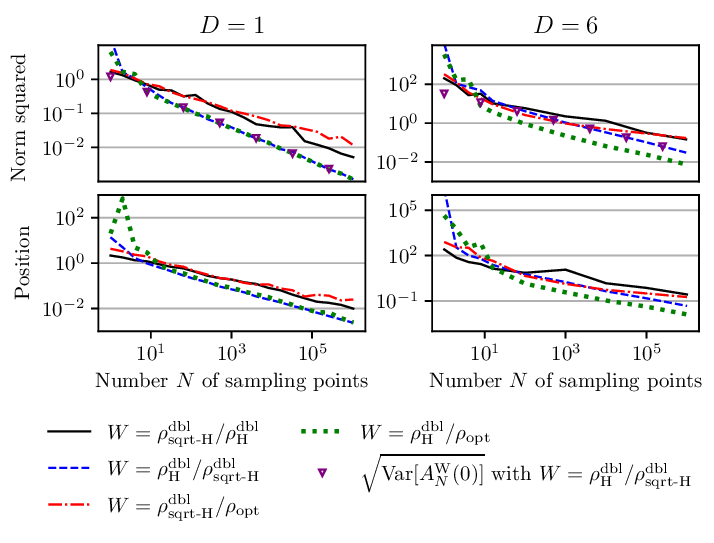}
    \caption{Initial sampling error for $\hat{A}=\mathrm{Id}$ and $\hat{A}=\hat{q}_1$ in one and six dimensions as a function of the number $N$ of Monte Carlo quadrature points. Each panel displays the error for different choices of the weight $W$ as well as the analytical error estimation for $W=\rho_{\rm H}^{\rm dbl}/\rho_{\textup{sqrt-H}}^{\rm dbl}$ and $\hat{A}=\mathrm{Id}$  derived in \Cref{EX:3.6}.}
    \label{fig:1}
\end{figure}

To determine the choice of the weight $W$, we provide the following numerical example.
Consider a Gaussian initial state $\psi_0 = g_{z_0}$ with unit width, $q_0=(1,\dots,1)\in \mathbb{R}^D$ and $p_0=(1,\dots,1)\in \mathbb{R}^D$ and let $\hat{A}=\mathrm{Id}$ or $\hat{q}_1$.
\Cref{fig:1} displays the mean squared error of the weighted importance sampling estimator in one and six dimensions for several choices of the weight $W$.
The value obtained analytically in \Cref{EX:3.6} matches our numerical result.
Moreover, in both dimensions and for both operators, for large enough $N$, the choice $W=\rho^{\rm dbl}_{\rm H}/\rho_{\rm opt}$ for the weight provided the highest accuracy even though, for the position operator, the weight $W$ might have a diverging second moment.
Finally, for small $N$, one can see a higher error due to the bias of the estimator.

This short numerical example suggests using the weighted importance sampling estimator \eqref{EQ:weighted_MC_estimator} with the weight $W=\rho^{\rm dbl}_{\rm H}/\rho_{\rm opt}$.

\subsection{Initial time}
\begin{figure}[ht!]
    \centering
    \includegraphics{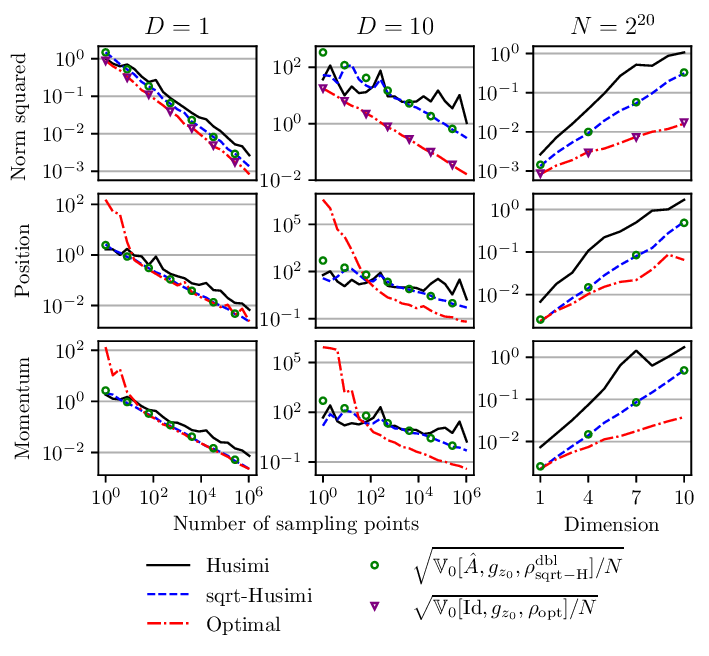}
    \caption{Sampling error of $\hat{A}=\mathrm{Id}$, $\hat{A}=\hat{q}_1$ and $\hat{A}=\hat{p}_1$ in one and ten dimensions as a function of the number $N$ of Monte Carlo points as well as a function of dimension $D$ for a fixed number of samples $N= 2^{20}$. 
    Each panel displays the error for the Husimi (solid line), sqrt-Husimi (dashed line), and optimal approach (dash-dotted line). Theoretical error estimations for the sqrt-Husimi and optimal approaches are displayed with marked lines. }
    \label{fig:Init_time}
\end{figure}
We start by considering the initial sampling error for the norm, position, and momentum expectations, that is, $\hat{A}=\mathrm{Id}, \hat{q}_j$ and $\hat{p}_j$,
with  $\epsilon = 1$, $q_0=(1, \dots , 1)$, and $p_0=(1, \dots ,1)$ for up to 10 dimensions. 
\Cref{fig:Init_time} shows the absolute errors
\begin{align}
    \vert A_N(0) - \langle \hat{A} \rangle_0 \vert
\end{align}
and
\begin{align}
    \vert A_N^{\rm W}(0) - \langle \hat{A} \rangle_0 \vert
\end{align}
between the Monte Carlo approximations $A_N(0)$ and $A_N^W(0)$ and the exact value at time $t=0$ as functions of the number of Monte Carlo quadrature points $N$ in one (first column) and ten (second column) dimensions as well as for a fixed $N=2^{20}$ as a function of the dimension $D$ (third column). Each panel was produced by averaging the error over $100$ independent simulations.

Due to the unbounded variance, the Husimi approach performs the worst, and the sqrt-Husimi and optimal approaches confirm our theoretical error estimations derived in \Cref{Ex:Variance_sqrtH}, \Cref{Ex:Variance_opt} and Appendix \ref{Variances_Initial_Time}.
For a small number of trajectories, the bias of the weighted importance sampling estimator causes a significant error that rapidly decays.

For $D=1$ and large $N$, there is almost no difference between the optimal and the sqrt-Husimi approaches.
However, in high dimensions, starting from approximately $N=10^2$ Monte Carlo points, both the correlation of the chain and the bias of the estimator are negligible, and we obtain an improvement with the optimal approach over both the Husimi and sqrt-Husimi approaches. If we compare different methods ``horizontally'', to reach the same accuracy with the optimal approach, one might need up to a factor of approximately $10^2$ fewer trajectories compared with the sqrt-Husimi approach.

\subsection{Harmonic Potential}
\begin{figure}[ht!]
    \centering
    \includegraphics{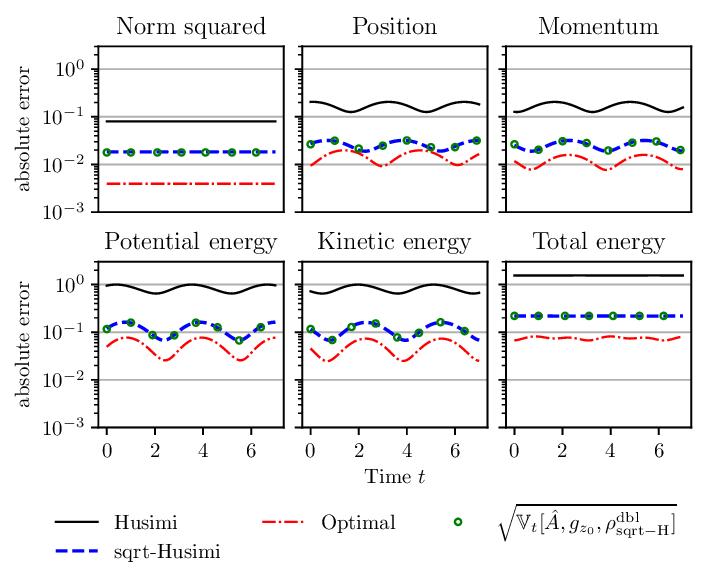}
    \caption{Time dependence of the sampling error of the Herman--Kluk expectation values of norm squared, position, momentum and energies propagated in a harmonic oscillator. 
    Each panel is produced by 100 independent simulations each consisting of $N=2^{20}$ trajectories. }
    \label{fig:Harmonic1}
\end{figure}
To analyse the time dependence of the variance, we consider a harmonic potential $V(x)=\vert x \vert^2/2$ in five spatial dimensions for one full oscillation period. 
We consider the same initial values as in the previous numerical example and compute the norm, position, momentum, potential, kinetic, and total energies and compare them to the corresponding expectation values $\langle\hat{A}\rangle_t$ obtained from the exact solution of the time-dependent Schrödinger equation \cite{book_Heller:2018, Kroeninger_Lasser_Vanicek:2023}. In this numerical example, the exact solutions are given by the constants $\langle \mathrm{Id} \rangle_t = 1$, $\langle \hat{H} \rangle_t = 6D/4 $ and the time-dependent values
\begin{equation}
\begin{alignedat}{2}
\langle \hat{q}_j\rangle_t    &= \cos{t} + \sin{t},
\quad\raisebox{-.5\normalbaselineskip}[0pt][0pt]{for all $j\in  \{1,\dots , D\},$ } \\
\langle \hat{p}_j\rangle_t &= \cos{t} - \sin{t}, \\
\langle V \rangle_t &= \frac{D}{4}\left( 1 + 2(\cos{t} + \sin{t}))^2 \right), \\
\langle T \rangle_t    &= \frac{D}{4}\left( 1 + 2(\cos{t} - \sin{t}))^2 \right).
\end{alignedat}
\end{equation}

\Cref{fig:Harmonic1} shows the behaviour of the absolute error between our Monte Carlo approximations with $N=2^{20}$  quadrature points and the exact solution as a function of time $t$.
The trajectories for the optimal approach were sampled using the Hamiltonian Monte Carlo \Cref{Alg:HMC}. 
The error of the total energy was produced by taking the absolute value of the difference between the exact total energy and the sum of the approximations for the potential and kinetic energy. 
In all the examples, we ran 100 individual simulations and then plotted the square root of the mean of the squares of the errors. 
We can see that, in all cases, the Husimi approach gives the highest error, whereas the optimal approach provides the most accurate results. 
The Husimi approach needs approximately 100 times more trajectories to achieve the same accuracy as the sqrt-Husimi approach and the sqrt-Husimi approach requires approximately 10 times more trajectories to reach the same accuracy as the optimal approach.
Moreover, the result for the square root Husimi approach almost perfectly coincides with the explicitly predicted errors (see Appendix \ref{Variances_Harmonic_Potential}).

\subsection{Henon-Heiles Potential}
Finally, we consider the modified Henon-Heiles potential
\begin{align}\label{Henon-Heiles_Potential}
V(x) = \sum_{j=1}^D \frac{x_j^2}{2} +\sum_{j=1}^{D-1} \left[\sigma \left( x_j x_{j+1}^2 - \frac{x_j^3}{3} \right) +\frac{\sigma^2}{16} \left( x_j^2 + x_{j+1}^2 \right)^2\right], \quad \sigma\geq 0,
\end{align}
as in \cite{Lasser_Sattlegger:2017, Faou_Lubich:2009, Meyer_Cederbaum:1990, Raab_Meyer:2000, Kucar_Cederbaum:1987, Lasser_Roeblitz:2010} in six spatial dimensions. This potential differs from the standard Henon-Heiles model \cite{Henon_Heiles:1964, Kay:1989, Choi_Vanicek:2019a} by an additional quartic term that makes the system bound.
We take $\epsilon=0.01$, $\sigma=1/\sqrt{80}$ and $p_0=(0, \dots,0)$ based on \cite{Lasser_Roeblitz:2010, Lasser_Sattlegger:2017} and let the initial displacement $q_0$ of the Gaussian wavepacket vary to be in regions of the potential with low to high anharmonicity. For the time propagation, we use a second-order St\o rmer-Verlet scheme with a stepsize $\tau = 0.2$.

\begin{figure}
    \centering
    \includegraphics{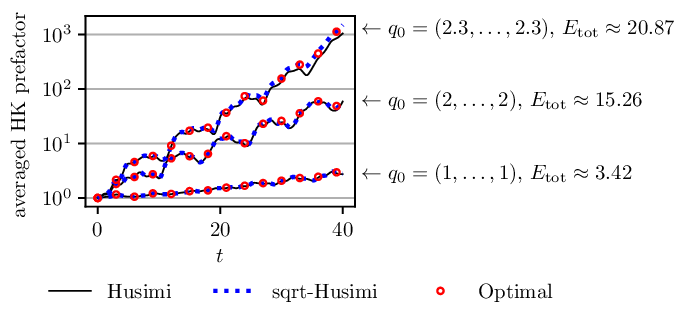}
    \caption{Time dependence of the averaged Herman--Kluk prefactor in a 6-dimensional Henon-Heiles potential for the three different sampling approaches and three different choices of the initial position $q_0$. The averaged Herman--Kluk prefactors were calculated with $N=2^{17}$ quadrature points each. For each initial condition, we display the total energy $E_{\mathrm{tot}}$.}
    \label{Fig:Avg_prefac_energy}
\end{figure}

\Cref{Fig:Avg_prefac_energy} shows the behaviour over time of the averaged Herman--Kluk prefactors
\begin{align}
    \frac{1}{N}\sum_{j=1}^N \vert R_t(y_j) R_t(z_j) \vert, 
\end{align}
where $(y_j,z_j)$ are sampled from $\rho^{\rm dbl}_{\rm H}, \rho^{\rm dbl}_{\textup{sqrt-H}}$ or $\rho_{\rm opt}$. 
We considered three different initial conditions, with  $q_0=k \cdot (1, \dots ,1)$ for $k\in\{1,2,2.3\}$. 
Note that for any initial displacement, the three sampling approaches result in a similar behaviour of the averaged Herman--Kluk prefactors.  
Moreover, for each  $k\in\{1,2,2.3\}$, we display the total energy $E_{\mathrm{tot}}$ of the systems that can be obtained from formulas stated in Appendix~\ref{Appendix:Inner_products}.
Small changes in the total energy of the system lead to large changes in the growth of the Herman--Kluk prefactor, showing that the potential \eqref{Henon-Heiles_Potential} has very anharmonic regions.

In the following, we will analyze the time evolution of the intrinsic error 
\begin{align}\label{EQ:Intrinsic_error}
    \vert A_N(t)-A_{2N}(t)\vert
\end{align} for the different displacements.
\Cref{fig:Henon1} shows the evolution up to the final time $T=40$ for a fixed number $N=2^{19}\approx 5\times 10^5$ of quadrature points. 
For a small $q_0=(1,\dots, 1)$, the optimal approach has a significantly lower error than the square root Husimi approach, and the Husimi approach performs even worse because of the unbounded variance of its estimator \eqref{EQ:Case_H}. 
For $q_0=(2, \dots, 2)$, the Husimi approach continues to have a significantly larger error than the other two approaches. 
The square root Husimi and optimal approaches behave similarly. 
The optimal approach still produces a smaller error, but it is less significant and only for times up to about $T=20$.
For a large $q_0=(2.3, \dots, 2.3)$, the square root Husimi and optimal approaches exhibit a similar performance. 
However, the Husimi approach does not necessarily increase the error, and if it does, then only for small times up to approximately $T=20$.
We can conclude that for small anharmonicity, where the Herman--Kluk prefactor behaves moderately, the choice of the density significantly improves the accuracy. For increasing anharmonicity, and hence for a Herman--Kluk prefactor that blows up, the choice of the density is less important up to the point where even an estimator with unbounded variance has a similar accuracy.

\begin{figure}[ht!]
    \centering
    \includegraphics{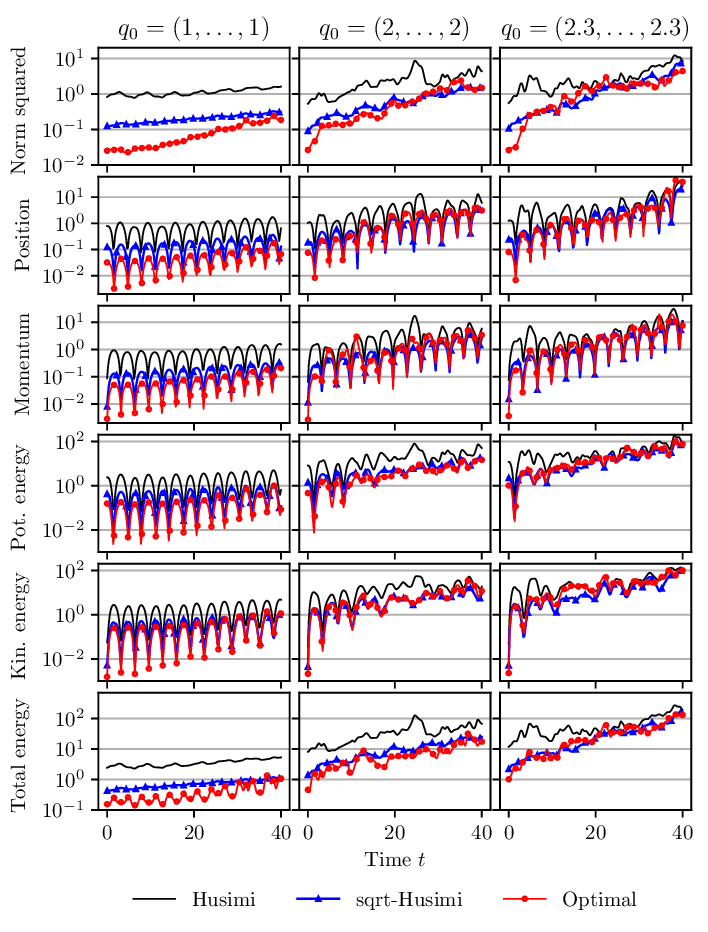}
    \caption{Time evolution of the intrinsic error \eqref{EQ:Intrinsic_error} using $N=2^{19}\approx 5\times 10^5$ quadrature points for the Husimi (solid line), square root Husimi (triangular marker) and optimal (circular marker) approaches and for three different choices of the initial position $q_0$. Each line was produced by averaging over eight independent simulations.}
    \label{fig:Henon1}
\end{figure}

\begin{figure}[ht!]
    \centering
    \includegraphics{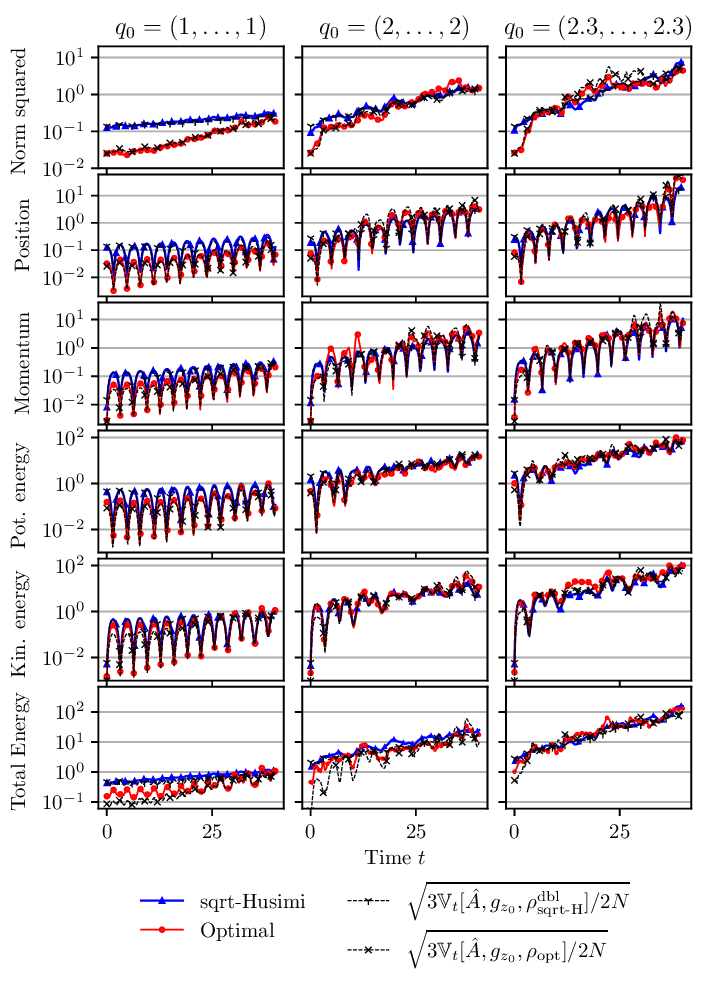}
    \caption{Time evolution of the intrinsic error \eqref{EQ:Intrinsic_error} (solid marked lines) and error estimations \eqref{EQ:Bound_intrinsic_measure} and \eqref{EQ:Bound_intrinsic_measure2} (dashed marked lines) using $N=2^{19}\approx 5\times 10^5$ quadrature points for three different choices of the initial position $q_0$. Each line was produced by averaging over eight independent simulations.}
    \label{fig:Henon2}
\end{figure}

Finally, we consider error estimations of the intrinsic error \eqref{EQ:Intrinsic_error} for the square root Husimi and optimal approaches. 
From \eqref{EQ:Intrinsic_error} it follows that
\begin{align}\label{EQ:Bound_intrinsic_measure}
    \mathbb{E}[\vert A_N(t)-A_{2N}(t)\vert^2] = \frac{3}{2N} \mathbb{V}_t[\hat{A}, \psi_0, \rho^{\rm dbl}_{\textup{\rm sqrt-H}} ],
\end{align}
where the variance can be simultaneously evaluated with another Monte Carlo estimator and without substantial additional effort  (see Appendix~\ref{Appendix:Bound_intrinsic_measure}). 
Even though we know for the weighted importance sampling estimator at most the order of the mean squared error by \Cref{Theorem:Properties_WIS}, we can still estimate $\mathbb{V}_t[\hat{A}, \psi_0, \rho_{\rm opt}]$ and compare the mean squared error with

\begin{align}\label{EQ:Bound_intrinsic_measure2}
     \frac{3}{2N} \mathbb{V}_t[\hat{A}, \psi_0, \rho_{\rm opt}].
\end{align}
More details on \eqref{EQ:Bound_intrinsic_measure} and the estimators of the variances can be found in Appendix~\ref{Appendix:Bound_intrinsic_measure}. 
\Cref{fig:Henon2} shows the same time evolution of the numerical intrinsic error \eqref{EQ:Intrinsic_error} as in \Cref{fig:Henon1}, but only for the square root Husimi and optimal approaches. 
Additionally, in \Cref{fig:Henon2} we also display the probabilistic error estimation \eqref{EQ:Bound_intrinsic_measure} for the sqrt-Husimi approach and the approximate error \eqref{EQ:Bound_intrinsic_measure2} for the optimal approach. 
Interestingly, we observe that in most cases, the weighted importance sampling estimator approximately follows \eqref{EQ:Bound_intrinsic_measure2}.

\section{Conclusion, outlook and further applications}\label{Sec:6}
We presented an analysis of Monte Carlo quadrature techniques for evaluating expectation values of quantum-mechanical observables $\hat{A}$ within the Herman--Kluk approximation.
After introducing a sufficient condition for the convergence of the crude Monte Carlo estimator, we investigated three sampling approaches.
For the initial double phase-space sampling, we used one of three different strategies, based on the Husimi density \eqref{EQ:Case_H}, its square root \eqref{EQ:Case_sqrt_H} or the new optimal method \eqref{EQ:Case_opt}. 
While the generation of quadrature points with the first two approaches is independent of $\hat{A}$, the optimal approach incorporates the quantum-mechanical observable in its sampling.
The Husimi approach has a Monte Carlo integrand with an unbounded second moment, and the square root Husimi approach produces a favourable finite variance.
The optimal approach minimizes the variance of the crude Monte Carlo integrand at the initial time but it still has an exponential dependence on the spatial dimension $D$ (see also \cite[Section 12.2]{Henning_Kersting:2022} for a discussion of the dimensionality dependence of Monte Carlo quadrature).
To take full advantage of the new approach, we extended the crude Monte Carlo estimator to a weighted, self-normalizing version.

The numerical experiments for the harmonic oscillator and the Henon-Heiles potential confirm that the infinite second moment leads to slower convergence of the Husimi approach and that the optimal approach has the fastest convergence. 
For dynamical regions with high anharmonicities, where the Herman--Kluk propagator is known to be inaccurate due to the drastic growth of the Herman--Kluk prefactor, the differences in convergence become smaller up to the point, where the choice of the sampling approach plays a minor role.

The new approach presented in this paper has applications in several fields.
Theoretical chemists make ubiquitous use of the Herman--Kluk approximation. 
They apply further approximations such as time-averaging \cite{Elran_Kay:1999a, Kaledin_Miller:2003, Buchholz_Ceotto:2016, Buchholz_Ceotto:2018}, Filinov filtering \cite{Walton_Manolopoulos:1996, Filinov:1986, Makri_Miller:1987}, hybrid dynamics \cite{Grossmann:2006, Goletz_Grossmann:2009} and many others to improve Monte Carlo estimators by reducing the required number of quadrature points. 
Our idea can be combined with these techniques to make the Herman--Kluk approximation even more computationally attractive.
Moreover, expectation values are a special case of time-correlation functions \cite{Church_Ananth:2017, Pollak_Liu:2022, Miller:2001, Sun_Miller:2002}. 
The idea of including observables in the sampling can be extended to time-correlation functions. It has been explored for classical correlation functions in \cite{Zimmermann_Vanicek:2013} and will be explored for evaluating time-correlation functions within the Herman--Kluk approximation in the future.
Finally, the analysis at the initial time $t=0$ presented here might be of general interest for all Gaussian-based methods.

\section*{Acknowledgments}
C. Lasser acknowledges financial support by the Deutsche Forschungsgemeinschaft (DFG, German Research Foundation)  TRR 352 – Project-ID 470903074.
F. Kröninger and J. Van\'{i}\v{c}ek acknowledge financial support from the European Research Council (ERC) under the European Union's Horizon 2020 Research and Innovation Programme (Grant Agreement No. $683069$-MOLEQULE).

\bibliographystyle{abbrvnat}
\bibliography{final_bib}

\appendix

\section{Bound for intrinsic measures}\label{Appendix:Bound_intrinsic_measure}
When an exact solution is not available, intrinsic measures can be used as indicators of the goodness of an approximation.
Assume we have an unbiased estimator $A_N(t) = \frac{1}{N}\sum_j h_t(w_j)$ of $\langle \hat{A} \rangle_t$ with existing second moment. 
One can consider the distance between $A_N$ and $A_{2N}$ since
\begin{align}
\begin{split}
    &\mathbb{E}[\vert A_N - A_{2N} \vert^2] 
    \\& = \mathbb{E}\left[ (\vert A_N - \langle \hat{A} \rangle_t) + (\langle \hat{A} \rangle_t - A_{2N}) \vert^2 \right]
    \\& = \mathbb{E}[\vert A_N - \langle \hat{A} \rangle_t \vert^2] + \mathbb{E}[\vert A_{2N}- \langle \hat{A} \rangle_t \vert^2] - 2\mathrm{Re}\left(\mathbb{E}[A_{N} - \langle \hat{A} \rangle_t]^* \mathbb{E}[ A_{2N} - \langle \hat{A} \rangle_t ]\right) 
    \\&= \frac{\mathbb{V}_t}{N}  + \frac{\mathbb{V}_t}{2N} - 0 
    \\& = \frac{3}{2N}\mathbb{V}_t,
\end{split}
\end{align}
where we assumed that the samples of $A_N$ and $A_{2N}$ were independent. Let us recall the formulas \eqref{EQ:Variance_sqrtH} and \eqref{EQ:Variance_opt} for the variances
\begin{align}
    \mathbb{V}_t[\hat{A}, \psi_0, \rho^{\rm dbl}_{\textup{sqrt-H}} ] = \kappa_{\textup{sqrt-H}}^2 \int_{\mathbb{R}^{4D}} \vert f_0(w) \vert \vert \Phi_t(w) \vert^2 \vert O_t[\hat{A}](w) \vert^2 \,dw - \vert\langle \hat{A} \rangle_t\vert^2
\end{align}
and
\begin{align}
    \mathbb{V}_t[\hat{A}, \psi_0, \rho_{\rm opt} ] = \kappa_{\rm opt} \int_{\mathbb{R}^{4D}} \vert f_0(w) \vert \vert \Phi_t(w) \vert^2 \frac{\vert O_t[\hat{A}](w) \vert^2}{\vert O_0[\hat{A}](w) \vert} \,dw - \vert\langle \hat{A} \rangle_t\vert^2.
\end{align}
In the calculation, we can simultaneously evaluate the appearing integrals without any additional work using another Monte Carlo estimator. 
Assuming we know $\kappa_{\textup{sqrt-H}}$, we then have
\begin{align}
    \int_{\mathbb{R}^{4D}} \vert f_0(w) \vert \vert \Phi_t(w) \vert^2 \vert O_t[\hat{A}](w) \vert^2 \,dw \approx \frac{\kappa^2_{\textup{sqrt-H}}}{N}\sum_{j=1}^N  \vert \Phi_t(y_j,z_j) \vert^2 \vert O_t[\hat{A}](y_j,z_j) \vert^2,
\end{align}
for all operators $\hat{A}$ and with $y_j,z_j \sim \rho_{\textup{sqrt-H}}$ independent.
For the optimal approach, whenever $\kappa_{\rm opt}$ is known, we can use
\begin{align}
    \int_{\mathbb{R}^{4D}} \vert f_0(w) \vert \vert \Phi_t(w) \vert^2 \frac{\vert O_t[\hat{A}](w) \vert^2}{\vert O_0[\hat{A}](w) \vert} \,dw \approx \frac{\kappa_{\rm opt}}{N}\sum_{j=1}^N  \vert \Phi_t(w_j) \vert^2 \frac{\vert O_t[\hat{A}](w_j) \vert^2}{\vert O_0[\hat{A}](w_j) \vert^2},
\end{align}
where $w_j\sim \rho_{\rm opt}$ are independent.
If $\kappa_{\rm opt}$ is unknown, a slight change in the weighted importance sampling estimator leads to
\begin{align}
\begin{split}
    &\kappa_{\rm opt}\int_{\mathbb{R}^{4D}} \vert f_0(w) \vert \vert \Phi_t(w) \vert^2 \frac{\vert O_t[\hat{A}](w) \vert^2}{\vert O_0[\hat{A}](w) \vert} \,dw
    \\&\approx \frac{\frac{1}{N}\sum_{j=1}^N  W(w_j)\vert f_0(w_j)\vert\vert \Phi_t(w_j) \vert^2 \vert O_t[\hat{A}](w_j) \vert^2/(\rho^{\rm dbl}_{\rm H}(w_j))\vert O_0[\hat{A}](w_j) \vert)}{\frac{1}{N^2}\left(\sum_{j=1}^N W(w_j)\right)^2},
\end{split}
\end{align}
with the weight $W=\rho^{\rm dbl}_{\rm H }/\rho_{\rm opt}$ and independent samples $w_j \sim \rho_{\rm opt}$.

\section{Moments of Gaussian random variables} \label{Appendix:Moments_of_Gaussians}
Consider a multivariate Gaussian
\begin{align}
    G^{\rm M}(x)= (2\pi\epsilon)^{-D/2} \det{\Xi}^{-1/2} \exp{\left( -\frac{1}{2\epsilon} (x-\mu)^T \Xi^{-1} (x-\mu) \right)}
\end{align}
with mean vector $\mu\in\mathbb{R}^D$ and symmetric, positive-definite covariance matrix $\Xi\in\mathbb{R}^{D \times D}$.
Let $\Xi = L L^T$ be its Cholesky decomposition with a lower triangular matrix $L\in\mathbb{R}^{D \times D}$ and let $Z\sim N(\mu, \Xi).$ 
Moreover, let $\textrm{Pol}: \mathbb{R}^D \to \mathbb{R}$ be a multivariate polynomial. Then,
\begin{align}
    \begin{split}
        \mathbb{E}[\textrm{Pol}(Z)]  = \mathbb{E}[\textrm{Pol}(LX+\mu)]
    \end{split}
\end{align}
with the $D$-dimensional random vector $X=(X_1, \dots, X_D) \in \mathbb{R}^D$, where $X_1,\dots, X_D$ are independent and identically distributed with respect to $N(0,\epsilon)$. Since the composition of two polynomials is a polynomial and since $X_1, \dots, X_D$ are independent and identically distributed, $\mathbb{E}[\textrm{Pol}(LX+\mu)]$ depends polynomially on $\epsilon$, $\mu$ and $\Xi$ \cite[Chapter 5.4]{Papoulis_Pillai:2002}.

\section{Formulas for inner products} \label{Appendix:Inner_products}
In the following, we provide analytical formulas for the overlaps of the form
    $\langle g_y , \hat{A}  g_z \rangle$
for important examples of $\hat{A}$ 
 with a Gaussian wavepacket
\begin{align}
  g_z(x)= \left( \frac{\det \Gamma }{\pi^D \epsilon^D} \right)^{1/4} \exp{\left[-\frac{1}{2\epsilon} (x-q)^T  \Gamma  (x-q) +\frac{i}{\epsilon}p^T (x-q) \right] }, \quad z=(q,p). 
\end{align}

\begin{example}[$\hat{A}=\mathrm{Id}$]\label{Ex:A1}

Using the substitution $x \mapsto x +(q_y+q_z)/2$ and the Fourier transform, we obtain
\begin{align}
\begin{split}
    \langle g_y  , g_z \rangle  
     & = \left( \frac{\det \Gamma }{\pi^D \epsilon^D} \right)^{1/2} \int_{\mathbb{R}^{D}} \exp{\left[-\frac{1}{2\epsilon} (x-q_y/2+q_z/2)^T  \Gamma  (x-q_y/2+q_z/2)\right]}
     \\& \times \exp{\left[-\frac{1}{2\epsilon} (x-q_z/2+q_y/2)^T  \Gamma  (x-q_z/2+q_y/2)\right]}
     \\ & \times \exp{\left[-\frac{i}{\epsilon} p_y^T   (x-q_y/2+q_z/2) +\frac{i}{\epsilon}p_z^T  (x-q_z/2+q_y/2) \right]} \,dx
    \\& = \exp{ \left\{ \left[ -\frac{1}{4} (y-z)^T  \Sigma_0  (y-z) + \frac{i}{2} (p_y+p_z)^T  (q_y-q_z) \right]/\epsilon \right\}},
\end{split}
\end{align}
with $\Sigma_0=\mathrm{diag}(\Gamma, \Gamma^{-1})\in\mathbb{R}^{2D}$.
\end{example}
\begin{example}[$\hat{A}=\hat{q}_j$] \label{Ex:A2}
Similarly, for the position operator it follows
\begin{align}
\begin{split}
\langle g_y  , \hat{q}_j  g_z \rangle & 
     = \frac{1}{2}\left( q_y +q_z -i \Gamma^{-1}  p_y+ i \Gamma^{-1}  p_z \right)_j \langle g_y , g_z \rangle.
\end{split}
\end{align}
 
\end{example}

\begin{example}[$\hat{A}=\hat{q}^2_j$]
For the second moment of the position operator it holds
\begin{align}
\begin{split}
\langle g_y  , \hat{q}_j^2  g_z \rangle 
       = \left[ 2\epsilon\Gamma^{-1} + (q_y+q_z-i\Gamma^{-1}p_y + i\Gamma^{-1}p_z)^2\right]_j \frac{\langle g_y , g_z \rangle}{4}.
\end{split}
\end{align}

\end{example}

\begin{example}[$\hat{A} = \hat{p}_j$]
Changing to the momentum representation and applying the same techniques as before, we obtain
\begin{align}
    \begin{split}
        \langle g_y , \hat{p}_j g_z \rangle = \frac{1}{2}\left( p_y+p_z+i\Gamma q_y - i \Gamma q_z \right)_j \langle g_y , g_z \rangle.
    \end{split}
\end{align}

\end{example}

\begin{example}[$\hat{A} = \hat{p}^2_j$]
For the kinetic energy it holds
\begin{align}
\langle g_y , \hat{p}^2_j  g_z \rangle = \left[ 2\epsilon \Gamma + (p_y+p_z+i\Gamma q_y - i\Gamma q_z)^2\right]_j \frac{\langle g_y , g_z \rangle}{4}.
\end{align}

\end{example}

\begin{example}[Henon-Heiles potential \eqref{Henon-Heiles_Potential}]
By linearity, the overlap is given by
\begin{align}
\begin{split}
    \langle g_y , V g_z \rangle = \underbrace{\frac{1}{2} \sum_{j=1}^D  \langle g_y, \hat{q}_j^2 g_z \rangle}_{=: I_1} + \underbrace{\sigma \sum_{j=1}^{D-1} \langle g_y,  \left( \hat{q}_j \hat{q}_{j+1}^2 - \frac{\hat{q}_j^3}{3} \right) g_z \rangle}_{=:I_2} + \underbrace{\frac{\sigma^2}{16} \sum_{j=1}^{D-1} \langle g_y,   \left( \hat{q}_j^2 + \hat{q}_{j+1}^2 \right)^2 g_z \rangle}_{=:I_3}.
     \end{split}
\end{align}
We obtain
\begin{align}
    \begin{split}
        I_1 = \frac{\langle g_y , g_z \rangle}{8} \sum_{j=1}^D \left[ 2\epsilon\Gamma^{-1} + (\overline{q}+i\Gamma^{-1}\overline{p})^2\right]_j ,
    \end{split}
\end{align}
\begin{align}
    \begin{split}
        I_2 = \sigma \langle g_y , g_z \rangle \sum_{j=1}^{D-1} & \left\{\frac{ (\overline{q}+i\Gamma^{-1}\overline{p})_j}{2}
        \frac{\left[ 2\epsilon\Gamma^{-1} + (\overline{q}+i\Gamma^{-1}\overline{p})^2\right]_{j+1}}{4}\right.
        \\& \left.-\frac{\left[6\epsilon\Gamma^{-1}\overline{q} + 6i\epsilon\Gamma^{-2} \overline{p} + (\overline{q} + i\Gamma^{-1}\overline{p})^3 \right]_j}{24}\right\}
    \end{split}
\end{align}
and
\begin{align}
    \begin{split}
        I_3 
        &= \frac{\sigma^2 \langle g_y , g_z \rangle}{16} \sum_{j=1}^{D-1} \left\{\frac{\left[ 2\epsilon\Gamma^{-1} + (\overline{q}+i\Gamma^{-1}\overline{p})^2\right]_j\left[ 2\epsilon\Gamma^{-1} + (\overline{q}+i\Gamma^{-1}\overline{p})^2\right]_{j+1}}{8} \right.
        \\& \left.+ \frac{\left[12\epsilon \Gamma^{-1} \overline{q}^2+24ia^3\overline{q}s-12a^4s^2+12\epsilon^2\Gamma^{-2} + (\overline{q} +i\Gamma^{-1}\overline{p})^4 \right]_j}{16} \right.
        \\& \left.+ \frac{\left[12\epsilon\Gamma^{-1}\overline{q}^2+24ia^3\overline{q}s-12a^4s^2+12\epsilon^2\Gamma^{-2} + (\overline{q} +i\Gamma^{-1}\overline{p})^4 \right]_{j+1}}{16}  
         \right\} 
    \end{split}
\end{align}
with the abbreviations $\overline{q} := q_y+q_z$, $\overline{p}  := p_z - p_y$,  $a:= \epsilon^{1/2}\Gamma^{-1/2}$ and $s:= (\epsilon \Gamma)^{-1/2} \overline{p}$. 
\end{example}

\section{Variances for the square root Husimi approach} \label{Appendix:5}
We provide analytical formulas for the variance \eqref{EQ:Variance_sqrtH} with a Gaussian initial state $\psi_0 = g_{z_0}$. For notational simplicity, we assume $\Gamma=\mathrm{Id}_D$ and employ the notations $w=(y,z)$ and $w_0=(z_0,z_0).$ To obtain the formulas for general $\Gamma$, one may invoke the transformation 
\begin{align}
    (w-w_0) \mapsto \begin{pmatrix}
      \Sigma_0^{-1/2}&0\\0 &  \Sigma_0^{-1/2} 
    \end{pmatrix} (w-w_0),
\end{align}
where $\Sigma_0=\text{diag}{(\Gamma,\Gamma^{-1})}\in\mathbb{R}^{2D\times 2D}$. Moreover, whenever we consider $4D\times 4D$ matrices of the form $\begin{pmatrix}
    a&b\\c&d
\end{pmatrix}$, $a,b,c,d\in\mathbb{R}$, we understand $\begin{pmatrix}
    a\mathrm{Id}_{2D}&b\mathrm{Id}_{2D}\\c\mathrm{Id}_{2D}&d\mathrm{Id}_{2D}
\end{pmatrix}$.
\subsection{Initial time}\label{Variances_Initial_Time}
\begin{example}[$\hat{A} = \mathrm{Id}$]
    \begin{align}
\begin{split}
   &\mathbb{V}_0[ \textup{Id} , g_{z_0}, \rho^{\rm dbl}_{\textup{sqrt-H}} ] 
    \\& = (\pi\epsilon)^{-2D} \int_{\mathbb{R}^{4D}} 
     \exp{\left[ -\frac{1}{4\epsilon} (w-w_0)^T  \begin{pmatrix}
        3 & -2\\ -2 & 3 
    \end{pmatrix}  (w-w_0) \right]} \,dw -1
    \\ & =  \left(\frac{16}{5}\right)^D-1.
\end{split}
\end{align}
\end{example}
\begin{example}[$\hat{A} = \hat{q}_j$]\label{Ex:A3}
Employing the transformation \begin{align}
     \begin{pmatrix}
        z-z_0\\y-z_0
    \end{pmatrix} \mapsto \frac{1}{2} \begin{pmatrix}
       1&-1\\1 & 1 
    \end{pmatrix}  \begin{pmatrix}
        z-z_0 \\ y-z_0
    \end{pmatrix},
\end{align} we obtain 
\begin{align}
\begin{split}
   \mathbb{V}_0[\hat{q}_j , g_{z_0}, \rho^{\rm dbl}_{\textup{sqrt-H}}] 
    & = \frac{(\pi\epsilon)^{-2D}}{4} \int_{\mathbb{R}^{4D}} \vert q_{z,j}+q_{y,j}+i p_{z,j} - i p_{y,j} \vert^2
    \\ & \times \exp{\left[ -\frac{1}{4\epsilon} \begin{pmatrix}
        z-z_0 \\ y-z_0
    \end{pmatrix}^T  \begin{pmatrix}
        3 & -2 \\ -2 & 3 
    \end{pmatrix}  \begin{pmatrix}
        z-z_0 \\ y-z_0
    \end{pmatrix} \right]} \,d(z,y) -q_{0,j}^2
    \\ & =  \frac{1}{4} \left(\frac{16}{5}\right)^D\left[ 4q_{0,j}^2 + \frac{24}{5}\epsilon  \right]-q_{0,j}^2
\end{split}
\end{align}
where the $j-$th component of vectors is denoted by a subscript.

\end{example}
\begin{example}[$\hat{A} = \hat{p}_j$]
    Similarly to the position operator, for the momentum we obtain
\begin{align}
\begin{split}
   &\mathbb{V}_0[\hat{p}_j , g_{z_0}, \rho^{\rm dbl}_{\textup{sqrt-H}}] 
    =  \frac{1}{4} \left(\frac{16}{5}\right)^D\left[ 4p_{0,j}^2 + \frac{24}{5}\epsilon\  \right]-p_{0,j}^2.
\end{split}
\end{align} 
\end{example}
\begin{example}[$\hat{A} = \sum_{j=1}^{D}\hat{q}_j^2/2$] \label{Ex:16}
For simplicity, we assume the initial position $q_0=c_1(1,\dots, 1)$ and initial momentum $p_0=c_2(1, \dots, 1)$ for some constants $c_1,c_2\in\mathbb{R}$, i.\,e. the same parameters in each dimension. Then, the variance is given by
\begin{align}
\begin{split}
   &\mathbb{V}_0[\hat{A} , g_{z_0}, \rho^{\rm dbl}_{\textup{sqrt-H}}] 
     \\& = \frac{1}{16} \left( \frac{16}{5}\right)^D \left[ D^2 \left( 2c_1^2 + \frac{13    }{5}\epsilon \right)^2 + D \frac{96}{25}\left( 3\epsilon + 5c_1^2\right) \right]- \frac{D^2}{4}(\epsilon+2c_1^2)^2.
\end{split}
\end{align}
\end{example}
\begin{example}[$\hat{A} =  \sum_{j=1}^{D}\hat{p}_j^2/2$]
Similarly to the harmonic potential, for the kinetic energy we obtain
\begin{align}
\begin{split}
   &\mathbb{V}_0[\hat{A} , g_{z_0}, \rho^{\rm dbl}_{\textup{sqrt-H}}] 
    \\& = \frac{1}{16} \left( \frac{16}{5}\right)^D \left[ D^2 \left( 2c_2^2 + \frac{13    }{5}\epsilon \right)^2 + D \frac{96}{25}\left( 3\epsilon + 5c_2^2\right) \right]- \frac{D^2}{4}(\epsilon+2c_2^2)^2.
\end{split}
\end{align} 
\end{example}
\begin{example}[$\hat{A} =  \sum_{j=1}^{D}(\hat{p}_j^2 + \hat{q}_j^2)/2 $]
\begin{align}
\begin{split}
   &\mathbb{V}_0[\hat{A} , g_{z_0}, \rho^{\rm dbl}_{\textup{sqrt-H}}] 
    \\& = \frac{1}{4} \left( \frac{16}{5}\right)^D \left[ D^2  \left( (c_1^2 + c_2^2) + \frac{13    }{5}\epsilon \right)^2 + D \frac{24}{5}\left( \frac{6}{5}\epsilon + c_1^2+c_2^2\right) \right]
    \\&- D^2(\epsilon+c_1^2+c_2^2)^2.
\end{split}
\end{align} 
\end{example}

\subsection{Harmonic potential}\label{Variances_Harmonic_Potential}
We assume a Gaussian initial state with $\Gamma = \textup{Id}_D$. For a harmonic potential $V(x)=\vert x\vert^2/2$, there exist explicit solutions to the position, momentum, and Herman--Kluk prefactor. In particular, for all $z=(q,p)\in \mathbb{R}^{2D}$,
\begin{align}
    &q(t)=q \cos(t)+p \sin(t),
    \\& p(t)=p\cos(t)-q\sin(t),
    \\& \vert R_t(z)\vert^2=1.
\end{align}
Moreover, the Herman--Kluk approximation itself is exact. Hence, we can calculate the behaviour of the variances over time.

\begin{example}[$\hat{A}=\mathrm{Id}$]
\begin{align}
    \begin{split}
   &\mathbb{V}_t[\textup{Id} , g_{z_0}, \rho^{\rm dbl}_{\textup{sqrt-H}}] 
     = \left( \frac{16}{5}\right)^D-1.
    \end{split}
\end{align}
\end{example}

\begin{example}[$\hat{A}=\hat{q}_j$]
    \begin{align}
    \begin{split}
   &\mathbb{V}_t[\hat{q}_j, g_{z_0}, \rho^{\rm dbl}_{\textup{sqrt-H}}] = \frac{1}{4}\left(\frac{16}{5} \right)^D \left[ 4q_{0,j}(t)^2+\frac{24}{5}\epsilon \right] - q_{0,j}(t).
    \end{split}
\end{align}

\end{example}

\begin{example}[$\hat{A}=\hat{p}_j$]
    \begin{align}
    \begin{split}
   &\mathbb{V}_t[\hat{p}_j, g_{z_0}, \rho^{\rm dbl}_{\textup{sqrt-H}}] = \frac{1}{4}\left(\frac{16}{5} \right)^D \left[ 4p_{0,j}(t)^2+\frac{24}{5}\epsilon \right] - p_{0,j}(t).
    \end{split}
\end{align}

\end{example}

\begin{example}[$\hat{A}=\sum_{j=1}^D\hat{q}^2_j/2$] Under the same assumption as in \Cref{Ex:16}, i.\,e. $q_0=c_1(0)(1, \dots ,1) $ and $p_0=c_2(0)(1, \dots ,1)$, $c_1(0),c_2(0) \in \mathbb{R}$, let
$c_1(t) = c_1(0)\cos(t)+c_2(0)\sin(t)$
and $c_2(t) = c_1(0)\cos(t)-c_2(0)\sin(t)$.
Then
    \begin{align}
\begin{split}
   \mathbb{V}_t[\hat{A} , g_{z_0}, \rho^{\rm dbl}_{\textup{sqrt-H}}] 
     & = \frac{1}{16} \left( \frac{16}{5}\right)^D \left[ D^2 \left( 2c_1(t)^2 + \frac{13    }{5}\epsilon \right)^2 + D \frac{96}{25}\left( 3\epsilon + 5c_1(t)^2\right) \right]
     \\&- \frac{D^2}{4}(\epsilon+2c_1(t)^2)^2.
\end{split}
\end{align}

\end{example}

\begin{example}[$\hat{A}=\sum_{j=1}^D\hat{p}^2_j/2$]
Similarly to the harmonic potential, it holds
    \begin{align}
    \begin{split}
    \mathbb{V}_t[\hat{A} , g_{z_0}, \rho^{\rm dbl}_{\textup{sqrt-H}}] 
    & = \frac{1}{16} \left( \frac{16}{5}\right)^D \left[ D^2 \left( 2c_2(t)^2 + \frac{13    }{5}\epsilon \right)^2 + D \frac{96}{25}\left( 3\epsilon + 5c_2(t)^2\right) \right]
    \\&- \frac{D^2}{4}(\epsilon+2c_2(t)^2)^2.
\end{split}
\end{align} 

\end{example}

\begin{example}[$\hat{A}=\sum_{j=1}^D(\hat{q}^2_j+\hat{p}^2_j)/2$]
    \begin{align}
\begin{split}
   &\mathbb{V}_t[\hat{A} , g_{z_0}, \rho^{\rm dbl}_{\textup{sqrt-H}}] 
    \\& = \frac{1}{4} \left( \frac{16}{5}\right)^D \left[ D^2  \left( (c_1(t)^2 + c_2(t)^2) + \frac{13    }{5}\epsilon \right)^2 + D \frac{24}{5}\left( \frac{6}{5}\epsilon + c_1(t)^2+c_2(t)^2\right) \right]
    \\&- D^2(\epsilon+c_1(t)^2+c_2(t)^2)^2.
\end{split}
\end{align}

\end{example}

\section{Hamiltonian Monte Carlo algorithm}\label{Appendix:2}
The Hamiltonian Monte Carlo algorithm \cite{Bou_Sanz:2013} belongs to the class of Metropolis--Hastings algorithms whose aim is to generate samples of probability densities $\rho$ where sampling directly from $\rho$ on $\mathbb{R}^{D}$ is not viable either because the expression of $\rho$ is too complicated or because it is only known up to a multiplicative constant. 
Using a fictitious momentum variable, it generates a Markov chain $\{\xi^0, \xi^1, \dots \} = \{\xi^n\}_{n\in \mathbb{N}}$ which has $\rho$ as its invariant density. 
Introducing the potential energy $U(\xi)= - \log(\rho(\xi))$, the kinetic energy $K(\Xi)=\Xi^T  M^{-1}  \Xi/2$ with mass matrix $M$, and the Hamiltonian $H(\xi,\Xi) = U(\xi) + K(\Xi)$, one simulates a Markov chain according to \Cref{Alg:HMC}.
\begin{algorithm}
\caption{Hamiltonian Monte Carlo}
\label{Alg:HMC}
\begin{algorithmic}
\State Choose an initial double phase-space coordinate $\zeta^0\in\mathbb{R}^{4D}$. Then, for $n=0,\dots, N-1$ do:
\begin{enumerate}
    \item Generate a random momentum variable $\Xi^n$ from a multivariate normal distribution with a zero mean and a covariance matrix identical to the mass matrix $M$.
    \item Using the introduced potential and kinetic energy, propagate $(\zeta^n,\Xi^n)$ in time using a symplectic and time-reversible integrator to obtain a new phase-space point $(\zeta^*,\Xi^*)$.
    \item Set $\zeta^{n+1}=\zeta^*$ with probability $\alpha$, where
    \begin{align}
        \alpha = \min \left\{1, \exp\left[ H(\zeta^n, \Xi^n) - H(\zeta^*, \Xi^*) \right]  \right\},
    \end{align}
    otherwise set $\zeta^{n+1}=\zeta^n$.
\end{enumerate} 
\end{algorithmic}
\end{algorithm}
This algorithm has a few parameters, namely, the initial point $\zeta_0$ and the mass matrix $M$ as well as the step size $\Delta t$ and final time $T$ for the numerical integrator. 
These must be chosen carefully to obtain a valuable sequence $\{\zeta^n\}$. 

\subsection{Gradients for Hamiltonian Monte Carlo}
In the following, we provide a method to derive gradients for the Hamiltonian Monte Carlo algorithm to generate a Markov chain distributed with respect to $\rho_{\rm opt}$ for specific examples of $\hat{A}$. We assume a Gaussian initial state $\psi_0 = g_{z_0}$, $z_0\in\mathbb{R}^{2D}$, and a unit width matrix $\Gamma=\mathrm{Id}_D$. In our examples, we have the polynomial decomposition
\begin{align}
\begin{split}\label{AppendixE:pol}
    \rho_{\rm opt}(w)&\propto\vert\langle g_z , \psi_0 \rangle \langle g_y , \psi_0 \rangle \langle g_y , \hat{A} g_z\rangle\vert
    \\& =\vert \textup{Pol}(y,z) \vert \vert\langle g_z , \psi_0 \rangle \langle g_y , \psi_0 \rangle \langle g_y ,  g_z\rangle\vert
\end{split}
\end{align}
and hence the potential for the Hamiltonian Monte Carlo algorithm is given by
\begin{align}
\begin{split} \label{AppendixE:Ham}
    U[\hat{A}](y,z)  = -\log{\vert\langle g_z , \psi_0 \rangle \vert } - \log{\vert\langle g_y , \psi_0 \rangle \vert } - \log{\vert\langle g_y , g_z\rangle\vert} - \log{\vert \textup{Pol}(y,z) \vert}.
\end{split}
\end{align} 
\begin{example}[$\hat{A}= \mathrm{Id}$]
In this case, $\textup{Pol}(y,z)=1$ and the potential reduces to
\begin{align}
    \begin{split}
         &U[\textup{Id}](y,z) 
         = \frac{1}{4\epsilon} \left[(z-z_0)^T  (z-z_0) + (y-z_0)^T   (y-z_0)+ (z-y)^T   (z-y)\right].
    \end{split}
\end{align}
Hence, the gradient is given by
\begin{align}
    \nabla U[\textup{Id}](y,z) = \frac{1}{2\epsilon}\begin{pmatrix}
            (y-z_0) -   (z-y)
        \\
           (z-z_0) +  (z-y)
    \end{pmatrix}.
\end{align}

\end{example}
By the polynomial decomposition \eqref{AppendixE:pol} and the Potential \eqref{AppendixE:Ham}, for an operator $\hat{A}$ it follows 
\begin{align}
    U[\hat{A}](y,z) = U[\textup{Id}](y,z) - \log{\vert \textup{Pol}(y,z) \vert}
\end{align}
and 
\begin{align}\label{EQ:HMC_gradient}
\begin{split}
    &\nabla U[\hat{A}](y,z) 
    = \nabla U[\textup{Id}](y,z)  
 \\&+\frac{1}{\vert \textup{Pol}(y,z) \vert^2} 
\left[ \textup{Re}(\textup{Pol}(y,z)) \nabla \textup{Re}(\textup{Pol}(y,z)) + \textup{Im}(\textup{Pol}(y,z))  \nabla \textup{Im}(\textup{Pol}(y,z)) \right] .
\end{split}
\end{align}
Hence to derive the gradients for further examples of $\hat{A}$, one only needs polynomial terms. Those can be taken from Appendix~\ref{Appendix:Inner_products}.

\end{document}